\def\ds{\displaystyle}
\def\R{\mathbb{R}}
\def\e{\varepsilon}
\def\s{\sigma}
\def\d{\delta}
\def\l{\lambda}
\def\S{\mathcal{S}}
\def\a{\alpha}
\def\g{\gamma}
\def\y{\mathfrak{h}}
\def\c{\mathfrak{c}}
\def\ch2{\mathbb{C} \mathbb{H}^2}
\def\h2{\mathbb{H}^2}
\def\ddt{\frac{\partial}{\partial \theta}}
\def\ddr{\frac{\partial}{\partial r}}
\def\th{\theta}
\def\oshr/2{\cosh \left( \frac{r}{2} \right)}
\def\inhr/2{\sinh \left( \frac{r}{2} \right)}
\def\osh2r/2{\cosh^2 \left( \frac{r}{2} \right)}
\def\inh2r/2{\sinh^2 \left( \frac{r}{2} \right)}
\def\-1/4{- \frac{1}{4}}
\def\H{\mathbb{H}}
\def\C{\mathbb{C}}
\def\S{\mathbb{S}}
\newtheorem{theorem}{Theorem}[section]
\newtheorem{lemma}[theorem]{Lemma}
\newtheorem{cor}[theorem]{Corollary}
\theoremstyle{definition}
\theoremstyle{remark}
\newtheorem{remark}[theorem]{Remark}
\numberwithin{equation}{section}
\begin{document}

\title{K\"{a}hler manifolds with an almost $1/4$-pinched metric}

%    Information for first author
\author{Barry Minemyer}
%    Address of record for the research reported here
\address{Department of Mathematical and Digital Sciences, Commonwealth University - Bloomsburg, Bloomsburg, Pennsylvania 17815}
%    Current address
%\curraddr{Department of Mathematical Sciences,
%Binghamton University, Binghamton, New York 13902}
\email{bminemyer@commonwealthu.edu}
%    \thanks will become a 1st page footnote.
%\thanks{I was supported by the Mathematical Sciences Department at Binghamton University}

%    Information for second author
%\author{Author Two}
%\address{Mathematical Research Section, School of Mathematical Sciences,
%Australian National University, Canberra ACT 2601, Australia}
%\email{two@maths.univ.edu.au}

%    General info
\subjclass[2020]{Primary 53C20, 53C21; Secondary 53C24, 53C35, 53C55}

\date{\today.}

%\dedicatory{This paper is dedicated to our authors.}

%\keywords{complex hyperbolic plane, hyperbolic plane, negative curvature, hyperplane complement, warped product metric, Farrell-Jones conjecture}

\begin{abstract}
In this paper we construct an almost negatively $1/4$-pinched Riemannian metric on a class of compact manifolds recently discovered by Stover and Toledo in \cite{ST}.  
It is known that these manifolds are K\"{a}hler and not locally symmetric.  
These are the first known examples of not locally symmetric K\"{a}hler manifolds admitting such a metric and, via the result of Hernandez \cite{Hernandez} and Yau and Zheng \cite{YZ}, these manifolds cannot admit a negatively quarter-pinched Riemannian metric.
This metric is also interesting because it is a generalization to the complex hyperbolic setting of the famous pinched metric constructed by Gromov and Thurston in \cite{GT}.  
\end{abstract}

\maketitle

%\section*{This is an unnumbered first-level section head}
%This is an example of an unnumbered first-level heading.

%\specialsection*{Introduction}
%This is an example of a special section head.

\section{Introduction}\label{Section:Introduction}

Via Mostow rigidity \cite{Mostow} one knows that any compact Riemannian manifold of dimension $n \geq 3$ with constant sectional curvature equal to $-1$ must be isometric to a quotient of hyperbolic space $\H^n$.  
In \cite{GT} Gromov and Thurston show that this assumption on the curvature cannot be relaxed at all by constructing a family of branched cover manifolds that are not homotopy equivalent to a hyperbolic manifold, but which admit a Riemannian metric with all sectional curvatures lying in the interval $(-1-\e, -1+\e)$ for any prescribed $\e > 0$.  
Via Ontaneda's smooth hyperbolization \cite{Ontaneda}, one now obtains many more examples of pinched negatively curved Riemannian manifolds.  
These results demonstrate that there is generally much ``flexibility" within the setting of pinched negatively curved Riemannian manifolds.  

In contrast, our current knowledge of negatively curved compact K\"{a}hler manifolds indicates that they are incredibly rigid.  
Mostow rigidity also applies to the complex hyperbolic setting where, in this paper, we scale the metric in complex hyperbolic space $\C \H^n$ to have constant holomorphic sectional curvature $-4$.  
Thus, all sectional curvatures lie in the interval $[-4,-1]$, which is commonly referred to as being negatively 1/4-pinched.    
Mostow rigidity shows that if a compact K\"{a}hler manifold of complex dimension $n \geq 2$ has constant holomorphic sectional curvature of $-4$ (with respect to the K\"{a}hler metric), then this manifold is isometric to a quotient of $\C \H^n$.  

It is natural to ask if the curvature assumption here can be weakened and, as far as the author is aware, this question is still open.  
There are no known examples of compact manifolds which admit a K\"{a}hler metric with all holomorphic sectional curvatures lying within $(-4-\e, -4+\e)$ for sufficiently small $\e > 0$ besides the locally symmetric ones.  
But from a result of Deraux and Seshadri \cite{DS} one does know that, if such a compact K\"{a}hler manifold $M$ does exist, then it must be ``close" to being complex hyperbolic in the sense that the ratio of all Chern numbers of $M$ must be close to the corresponding ratio of Chern numbers of a complex hyperbolic manifold.  

Shockingly, the curvature assumption for the complex hyperbolic form of Mostow rigidity cannot be weakened for general Riemannian metrics.  
A result proved independently by Hernandez \cite{Hernandez} and Yau and Zheng \cite{YZ} says that, if a compact K\"{a}hler manifold $M$ admits a (a priori not necessarily K\"{a}hler) metric $g$ that is negatively 1/4-pinched, then $(M,g)$ is isometric to a quotient of $\C \H^n$.  
In particular, $g$ must be K\"{a}hler.  
If a Riemannian metric on a compact K\"{a}hler manifold had {\it all} sectional curvatures (not just holomorphic sectional curvatures) pinched near $-4$, then by scaling the metric one would have all sectional curvatures within $[-4,-1]$ and thus this manifold would be isometric to a (scaled) quotient of $\C \H^n$.  

The aforementioned results of Mostow, Hernandez, Yau and Zheng, and Dereaux and Seshadri paint a picture of incredible rigidity within compact K\"{a}hler manifolds with negative curvature.  
Indeed, prior to a recent breakthrough by Stover and Toledo \cite{ST}, there were very few known examples of such manifolds.  
The only such constructions that the author is aware of are by Mostow and Siu \cite{MS}, Hirzebruch \cite{Hirzebruch} (whose K\"{a}hler metric was constructed by Zheng \cite{Zheng}), Zheng \cite{Zheng2}, and Deraux \cite{Deraux}.  
The largest complex dimension of any of these examples is $n=3$ and, by the result of Deraux and Seshadri \cite{DS} mentioned above, it is known that none of the metrics on these examples are close to 1/4-pinched.  

In \cite{ST} Stover and Toledo prove the existence of a large family of complex hyperbolic branched cover manifolds in every dimension analogous to the manifolds constructed in \cite{GT}.  
Stover and Toledo prove that these manifolds are not homotopy equivalent to a quotient of $\C \H^n$ and, by the work of Zheng \cite{Zheng}, these manifolds admit a K\"{a}hler metric with negative sectional curvature.  
As mentioned above, the metric constructed by Zheng is not close to 1/4-pinched.  
Our main result is to construct an almost 1/4-pinched Riemannian metric on the Stover-Toledo manifolds, thus showing that the curvature assumptions in \cite{Hernandez} and \cite{YZ} cannot be relaxed and providing the first example of flexibility within the class of negatively curved K\"{a}hler manifolds.

%\vskip 20pt

\begin{theorem}\label{thm:main theorem}
For any $\e > 0$ and every integer $n \geq 2$, there exist compact K\"{a}hler manifolds of complex dimension $n$ that admit a Riemannian metric that is $\e$-close to being negatively $1/4$-pinched but are not homotopy equivalent to a locally symmetric manifold.  
\end{theorem}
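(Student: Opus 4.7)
The plan is to adapt the Gromov-Thurston construction \cite{GT} to the complex hyperbolic setting, using the Stover-Toledo manifolds as the underlying topological objects. By \cite{ST}, these manifolds $\tilde M$ are cyclic degree-$d$ branched covers of a compact complex hyperbolic manifold $M = \C\H^n/\Gamma$, ramified along a totally geodesic complex hypersurface $V \subset M$, and by \cite{Zheng} they are K\"ahler. Since the non-locally-symmetric conclusion is already established in \cite{ST}, the remaining task is to construct on $\tilde M$ a Riemannian metric whose sectional curvatures lie in $(-4-\e, -1+\e)$.

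First I would express the complex hyperbolic metric in Fermi coordinates $(r,\theta,y)$ around $V$. The normal bundle of $V$ is $J$-invariant, and the Jacobi equations along normal geodesics exhibit $\sinh(r)\cosh(r) = \tfrac12\sinh(2r)$ as the growth factor in the complex normal direction (where the holomorphic sectional curvature is $-4$) and $\cosh(r)$ as the growth factor in each totally real tangent direction (where the sectional curvature is $-1$). Pulling this local expression back to the $d$-fold branched cover and rewriting in the angular coordinate $\tilde\theta = \theta/d$ multiplies the complex-normal factor by $d$, producing a conical-type singularity along the preimage of $V$.

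I would then replace the complex-normal factor $d\sinh(r)\cosh(r)$ by a smooth function $h_\e(r)$ satisfying $h_\e(0) = 0$ and $h_\e'(0) = 1$ (ensuring the resulting metric is smooth at the ramification locus), $h_\e(r) = d\sinh(r)\cosh(r)$ for $r > r_0(\e)$ (so the metric agrees with the ambient complex hyperbolic metric outside a small tube of radius $r_0$), and interpolated on $[0,r_0]$ so that $-h_\e''/h_\e$ stays within $[-4-\e,-4+\e]$. The bulk of the work is then a case-by-case computation of the sectional curvatures of the resulting metric $g_\e$: tangent two-planes split into several classes according to how they meet the distinguished directions $\partial_r$, $\partial_{\tilde\theta}$, and $TV$, and on each class the curvature of $g_\e$ must be shown to lie in $(-4-\e,-1+\e)$. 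Outside the tube $g_\e$ is exactly complex hyperbolic, so only inside need anything be checked.

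The main obstacle is that $\C\H^n$ already sits on the boundary of being $1/4$-pinched, so there is no slack anywhere: any perturbation risks pushing some sectional curvature above $-1$ or below $-4$. Because the construction alters only the angular factor and leaves the tangential factor $\cosh^2(r)\,g_V$ untouched, purely $V$-tangential curvatures are essentially fixed by the modification, but curvatures of planes involving $\partial_{\tilde\theta}$, in particular the mixed planes spanned by $\partial_{\tilde\theta}$ and a $V$-tangent direction, must be controlled by a careful choice of the interpolation width $r_0(\e)$, the branching degree $d$, and the precise shape of $h_\e$. Executing this quantitative balancing, the natural complex-hyperbolic analog of the core estimate in \cite{GT}, is the heart of the proof.
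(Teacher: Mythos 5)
Your plan follows the naive Gromov--Thurston template: write the metric in Fermi coordinates about the ramification locus and replace the angular warping factor $\sinh(r)\cosh(r)$ by a slowly-varying interpolant from $\sinh(r)\cosh(r)$ to $d\sinh(r)\cosh(r)$. This is precisely the approach that the paper shows \emph{does not work} in the complex hyperbolic setting (see Remark \ref{rmk:GT difference}). The point you miss is that, unlike $\H^{n-2} \subset \H^n$, the horizontal distribution of the Riemannian submersion onto $\C\H^{n-1}$ is not integrable: the holomorphic pairs $(X_i, X_{i+1})$ satisfy $[X_i, X_{i+1}] = c_i\, \partial_\theta$ with $c_i = \pm 2$, so the O'Neill $A$-tensor is nonzero. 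As a result, the curvature formulas (Theorem \ref{thm:new curvature equations}) contain terms such as $\frac{c_i^2 v^2}{16 h^4}$ and $-\frac{3 c_i^2 v^2}{16 h^4}$ in which $v$ appears \emph{without} a compensating $v'$ or $v''$. Replacing $v = \sinh(2r)$ by $\sigma(r)\sinh(2r)$ therefore multiplies these terms by $\sigma^2$; plugging in $\sigma = d$ gives, for large $r$, $R_{i,2n-1,i,2n-1} \approx \sigma^2 - 2$ and $R_{i,i+1,i,i+1} \approx -1 - 3\sigma^2$, which leave the interval $(-4-\e, -1+\e)$ for any $d \geq 2$. Your claim that ``purely $V$-tangential curvatures are essentially fixed by the modification'' is exactly the false assumption: the $A$-tensor makes the holomorphic-pair curvature $R_{i,i+1,i,i+1}$ depend on $v$, hence on $\sigma$.

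The paper avoids this by a three-stage interpolation that your proposal does not anticipate. First it \emph{unwinds} $\c_n$ to the ``integrable complex hyperbolic metric'' $g_I$, slowly rotating the horizontal planes until they are tangent to the $r$-tubes, thereby killing the structure constants $c_i$ while keeping curvatures pinched (Theorem \ref{thm:warping construction 1}, which relies on the pinching Lemma \ref{thm:pinched curvature} being stable under $c_i \in (-2-\eta, 2+\eta)$). Only after $c_i = 0$ does it perform the Gromov--Thurston angle increase $v \mapsto \sigma v$ (Theorem \ref{thm:warping construction 2}); with the $A$-tensor terms gone, the $\sigma$-factors cancel as in the real hyperbolic case. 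Finally it \emph{rewinds} $g_I$ back to $\c_n$ on each sheet (Corollary \ref{cor:warping construction 3}) so the metric descends to the quotient. Without the unwinding step, the ``careful choice of the interpolation width'' that you invoke cannot save the estimate, because the offending curvature terms are $O(\sigma^2)$ independent of how slowly $\sigma$ varies.
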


A few remarks about the Theorem.  
As mentioned in the abstract, the Riemannian metric in Theorem \ref{thm:main theorem} is, in some sense, a generalization of the famous Gromov-Thurston metric in \cite{GT} to the complex hyperbolic setting.  
But the construction of the metric in Theorem \ref{thm:main theorem} is very different.  
Starting with the standard complex hyperbolic metric and simply ``increasing the angle" as in the Gromov-Thurston case will generally not result in pinched (or even negative) sectional curvature.  
See Remark \ref{rmk:GT difference} below for a more detailed discussion about why this is the case.  
It should be noted that one would not expect the Riemannian metric from the Theorem to be K\"{a}hler.
The reason for this will be discussed in the next Subsection.  
Finally, one should also note that Farrell and Jones construct exotic manifolds with almost negatively $1/4$-pinched metrics in \cite{FJ}.  
The manifolds in these examples are homeomorphic but not diffeomorphic to complex hyperbolic manifolds.  
It is not obvious to the author that the manifolds constructed by Farrell and Jones are still K\"{a}hler since they have a different smooth structure, but in any case Theorem \ref{thm:main theorem} still provides the first examples of almost negatively $1/4$-pinched metrics on K\"{a}hler manifolds that are not homotopy equivalent to complex hyperbolic manifolds.  

%As stated above, the new result developed in this paper is part (a) in Theorem \ref{thm:main theorem}.  
The new construction in this paper is the almost negatively $1/4$-pinched metric in Theorem \ref{thm:main theorem}.
The remainder of the Theorem can be obtained from the literature as follows.  
Following Stover and Toledo in \cite{ST}, let $\Gamma < \text{Isom}(\C \H^n)$ be a cocompact congruence arithmetic lattice of simple type, and let $M = \Gamma \setminus \C \H^n$.  
It is known that such a manifold $M$ contains an immersed (complex) codimension one totally geodesic submanifold $N$.  
One can then find a finite congruence cover $(M', N')$ of $(M,N)$ with $N'$ embedded in $M'$. 
Let $\Gamma'$ be such that $M' = \Gamma' \setminus \C \H^n$.   
Our almost $1/4$-pinched Riemannian metric will require a large normal injectivity radius about the totally geodesic submanifold.  
Suppose that, for a fixed $\e > 0$, we require a normal injectivity radius of $R$.  
By \cite{DKV} $M'$ contains only finitely many homotopy classes of closed geodesics with length less than $R$.  
Since $\Gamma'$ is residually finite, we can pass to a finite index subgroup $\Gamma''$ of $\Gamma'$ where we remove elements of $\Gamma'$ that correspond to each of these closed geodesics.  
In the resulting pair $(M'', N'')$, $N''$ will still be embedded and will now have normal injectivity radius of at least $R$.  
Let us quickly note that this is the only place where we require $M$ to be compact.  
If the submanifold $N$ can be constructed to have a sufficiently large normal injectivity radius, then the ambient manifold $M$ in this construction need not be compact.  

By Theorem 5.1 of \cite{ST} there exists an integer $d > 2$ and a further finite cover $(\tilde{M}, \tilde{N})$ of $(M'', N'')$ such that the $d$-fold ramified branched cover $X$ of $\tilde{M}$ about $\tilde{N}$ is a smooth manifold.  
This manifold $X$ is the manifold described in Theorem \ref{thm:main theorem}.  
Note that $\tilde{N}$ will still have a normal injectivity radius of at least $R$ within $\tilde{M}$, and therefore the ramification locus will still have a normal injectivity radius of at least $R$ in $X$.  
As noted above, Stover and Toledo in \cite{ST} prove that $X$ is not homotopy equivalent to a locally symmetric manifold (Theorem 1.5 in \cite{ST}) and Zheng proves that $X$ admits a (negatively curved) K\"{a}hler metric.  

%The metric constructed by Zheng in \cite{Zheng} actually has negative holomorphic sectional curvature (and thus all sectional curvatures are negative), but this metric generally cannot be altered to be almost $\frac{1}{4}$-pinched.  
%Since this metric is K\"{a}hler, if it were close to $\frac{1}{4}$-pinched then the ratio of the Chern numbers of manifolds which contain this metric would be close to the same ratios of complex hyperbolic manifolds by \cite{DS}.  
%But Zheng's metric can be applied to the surfaces developed by Hirzebruch in \cite{Hirzebruch}.  
%Hirzebruch computes formulas for the ratio of the two Chern numbers for these surfaces, and for $n \neq 1$ they differ from that of a complex hyperbolic manifold (see also Section 3 of \cite{Zheng}).  
%Zheng's metric is the regular complex hyperbolic metric outside of a small tubular neighborhood of the ramification locus of the branched cover, and has a small additive ``fix" within this tubular neighborhood.  
%But, for the degree $d$ large, any hope of constructing a metric with pinched curvature would require one to fix a large angle defect in a small interval while keeping tight control of the sectional curvature tensor.  
%As such, the author believes that any almost $\frac{1}{4}$-pinched metric on $X$ will require a modification on a large normal injectivity radius of the ramification locus.  

Lastly, Stover and Toledo do not compute the Chern numbers of $X$ when proving that $X$ is not homotopy equivalent to a locally symmetric manifold.  
They instead give an argument based on the normal bundle of the ramification locus.  
So it is unknown if the bounds on the difference between the ratio of Chern numbers of $X$ and of a complex hyperbolic manifold developed in \cite{DS} are satisfied.  
It seems likely to the author that there exists an example of a branched cover $X$ which satisfies Theorem \ref{thm:main theorem} but whose ratio of chern numbers is not close to that of a complex hyperbolic manifold, but again this is not immediately clear.

\subsection*{Outline of the construction of the metric $g$}
In what follows, all constructions are at the level of the universal cover.  
The constructed metric will descend to the manifold $X$ as long as we end up with the pullback metric by the end of the normal injectivity radius of the ramification locus.  
We also assume that the ramification locus is connected, and thus is a copy of $\C \H^{n-1}$ in the universal cover.  
If it is disconnected, then we just apply the constructed metric about each component.  

Let us quickly review the metric constructed by Gromov and Thurston in \cite{GT}.  
Let
	\begin{equation*}
	\l =  h^2 \y_{n-2} + v^2 d \th^2 + dr^2.
	\end{equation*}
where $\y_{n-2}$ denotes the hyperbolic metric on the core copy of $\H^{n-2}$ and $h$ and $v$ are positive functions of $r$.
The $h^2 \y_{n-2}$ component of $\l$ is the horizontal fiber and is tangent to the $r$-tube about the base copy of $\H^{n-2}$.  
It is well-known that $\l$ is the hyperbolic metric $\y_n$ on $\H^n$ written in polar coordinates about a totally geodesic codimension two copy of $\H^{n-2}$ if we choose the warping functions $h(r) = \cosh(r)$ and $v(r) = \sinh(r)$.  
If we let $h(r) = \cosh(r)$ and $v(r) = d \sinh(r)$ then $\l$ will be the pullback metric on the entire branched cover, which will be everywhere Riemannian except on the ramification locus.  
At these points the metric will have a cone angle of $2 d \pi$.  

If one chooses an orthonormal basis for $\l$ at any point and computes the components of the curvature tensor (Theorem \ref{thm:curvature formulas for h} below), one first sees that all mixed terms vanish.  
In the curvature formulas for the coordinate planes, every appearance of $v$ happens in the denominator of a fraction.  
There is then a $v'$ or a $v''$ in the numerator of each of these fractions.  
So whether $v$ is equal to $\sinh(r)$ or $d \sinh(r)$, the curvatures are still identically equal to $-1$.  
Gromov and Thurston then define $v(r) = \s(r) \sinh(r)$ where $\s(r): \R \to \R$ is a function that very slowly increases from $1$ to $d$.  
For $\s'$ and $\s''$ sufficiently small this produces a pinched negatively curved metric which is hyperbolic about a tube of the ramification locus, and which slowly warps to the full hyperbolic metric on each page of the branched cover.

The metric that we construct on the $d$-fold branched cover of $\C \H^n$ about a copy of $\C \H^{n-1}$ is similar, but the situation is now much more complicated.  
The associated metric is
	\begin{equation*}
	\mu =  h^2 \c_{n-1} + \frac{1}{4} v^2 d \th^2 + dr^2.
	\end{equation*}
where $\c_{n-1}$ denotes the complex hyperbolic metric on the core copy of $\C \H^{n-1}$.  
While terminology can vary in the literature, the above metric is {\it not} a doubly-warped product in the sense of Section 3.2.4 of \cite{Petersen}.  
The main reason for this is because the horizontal fiber is now not integrable, meaning that it is not everywhere tangent to the $r$-tube about $\C \H^{n-1}$.  
One obtains the complex hyperbolic metric $\c_n$ with the substitutions $h(r) = \cosh(r)$ and $v(r) = \sinh(2r)$ in the definition of $\mu$.  
The non-integrability of this metric leads to nonzero structure constants (equation \ref{eqn:structure constants 1} below), which create (among other issues) non-zero mixed terms for the sectional curvature tensor (Theorem \ref{thm:new curvature equations} below).

\begin{remark}\label{rmk:GT difference}
	Inserting the complex hyperbolic metric $\c_n$ about the ramification locus and then slowly increasing the angle in an identical manner as to the Gromov-Thurston metric will not result in a metric with pinched curvature and, moreover, will not generally produce a negatively curved metric.  
	This is due to the fact that there now exist terms in various components of the curvature tensor where $v$ appears without a corresponding $v'$ or $v''$.  
	If one inserts the values $c_i = 2$, $h(r) = \cosh(r)$, and $v(r) = \sigma(r) \sinh(2r) = 2 \sigma \sinh(r) \cosh(r)$, where $\s$ varies from $1$ to $d$ as above, into the formulas in Theorem \ref{thm:new curvature equations} below one has, after simplifying, the following formulas:
		\begin{itemize}
		\item $\ds{R^{\mu}_{i, 2n-1, i, 2n-1} = - \frac{\sigma' \sinh(r)}{\sigma \cosh(r)} - 1 + (\sigma^2 - 1)\frac{\sinh^2(r)}{\cosh^2(r)} \approx \s^2 - 2 }$
		\item $\ds{R^{\mu}_{i,i+1,i,i+1} = - \frac{\sinh^2(r)}{\cosh^2(r)} - \frac{4}{\cosh^2(r)} - \frac{3 \s^2 \sinh^2(r)}{\cosh^2(r)} \approx -1 - 3\s^2 }$	
		\end{itemize}
	and where the approximations are for $r$ large.  
	One sees immediately then that inserting $\s = d$ will lead to a sectional curvature tensor that is not almost $1/4$-pinched.  
	%Of course, one should also note that inserting the value $\s = 1$ gives the values of $-1$ and $-4$ for the standard complex hyperbolic metric.  

\end{remark}

Note though that each of the problematic terms listed above contains a structure constant $c_i$ in Theorem \ref{thm:new curvature equations}.  
So the first part of our warping process (Theorem \ref{thm:warping construction 1} below) is to slowly ``unwind" the complex hyperbolic metric making all structure constants zero.  
This is equivalent to slowly turning the horizontal fiber of the metric until it is tangent to the $r$-tube about the core copy of $\C \H^{n-1}$.  
By turning this fiber slowly we can keep the curvatures pinched near $[-4,-1]$.  
The almost $1/4$-pinched metric in Theorem \ref{thm:main theorem} is likely not K\"{a}hler in that it is not compatible with the ambient almost-complex structure, and this is the part of the process that is the reason for this.  

The resulting metric has an integrable horizontal fiber like the hyperbolic metric, but the horizontal fiber is still a warped copy of $\c_{n-1}$ and the vertical fiber is still $\sinh(2r)$-warped.  
We call this metric the {\it integrable complex hyperbolic metric} $g_I$.
We can now increase the angle with this metric in an almost identical way as to what was described above with the Gromov-Thurston metric while keeping the curvature pinched (Theorem \ref{thm:warping construction 2}).  
This leaves us with a copy of the metric $g_I$ on each page of the branched cover.  

The final step is to ``rewind" the metric $g_I$ back to the complex hyperbolic metric $\c_n$ on each page of the branched cover (Corollary \ref{cor:warping construction 3}).  
This process is exactly backwards of the unwinding procedure discussed above.  
This results in a copy of the original complex hyperbolic metric on each page of the branched cover, which will descend to a pinched negatively curved metric on $X$.

\subsection*{Acknowledgments}
The author would first and foremost like to thank J.F. Lafont for mentioning this project to the author many years ago and for pointing out several references in the Introduction.  
The author has also worked on a related project in the past with J. Meyer and B. Tshishiku and would like to thank them for various comments during our group meetings that have been helpful with writing this paper.

\vskip 20pt

%beginning of section 2
\section{Hyperbolic and complex hyperbolic metrics in polar coordinates}\label{section:polar coordinates}
In this Section we review curvature formulas for $\H^n$ written in terms of polar coordinates about a copy of $\H^{n-2}$, and we give a slightly new version of the curvature formulas for $\C \H^n$ written in polar coordinates about a copy of $\C \H^{n-1}$.  
We then prove a key Lemma (Lemma \ref{thm:pinched curvature}) which will be very important when we want to prove that our metric that ``unwinds" the complex hyperbolic metric is almost $1/4$-pinched.
The $(\H^n , \H^{n-2})$ case below is well known, but can certainly be found in \cite{Bel real} and \cite{Min warped}.  
Curvature formulas for $(\C \H^n, \C \H^{n-1})$ were derived in \cite{Bel complex} for curvatures in $[ -1, - 1/4]$, and converted to curvatures in $[-4,-1]$ in \cite{Min warped}.  
For this work we will need a slightly different version of these formulas, which we derive below.  
%In what follows we will simply stick with the model cases $(\H^n,\H^{n-2})$, and $(\C \H^n , \C \H^{n-1})$, but of course all of these computations will ``descend" to the pair $(M,N)$. 

\subsection{The metric on $\mathbb{H}^n$ in polar coordinates about $\H^{n-2}$}\label{subsection:hn/hn-2}
Let $\H^{n-2}$ denote a totally geodesic codimension two submanifold in $\H^n$, let $r$ denote the distance to $\H^{n-2}$ within $\H^n$, and let $\y_n$, $\y_{n-2}$ denote the hyperbolic metrics on $\H^n$ and $\H^{n-2}$, respectively.  
Let $\phi_h : \H^n \to \H^{n-2}$ denote the orthogonal projection onto $\H^{n-2}$, and let $E_h(r)$ denote the $r$-tube about $\H^{n-2}$.   
Since $\H^{n-2}$ is contractible we know that topologically $E_h(r) \cong \R^{n-2} \times \S^1$.  
The metric for $\H^n$ in polar coordinates about $\H^{n-2}$ is given by
	\begin{equation}\label{eqn:h metric}
	\y_n = \cosh^2(r) \y_{n-2} + \sinh^2(r) d \th^2 + dr^2
	\end{equation}
where $d \th^2$ denotes the standard metric on the unit circle $\mathbb{S}^{1}$.  
Note that the metric in equation \eqref{eqn:h metric} is defined on $E \times (0,\infty)$, where $E$ is an arbitrary $r$-tube as defined above.  

A key feature of the real hyperbolic metric is the following.  
Let $p \in \H^{n-2}$ and let $\check{X}_1, \check{X}_2, \hdots, \check{X}_{n-2}$ be a local orthonormal frame near $p$ in $\H^{n-2}$ satisfying that $[ \check{X}_i, \check{X}_j ]_p = 0$ for all $i, j$.
Let $q \in \H^n$ be such that $\phi_h(q) = p$.
Extend the collection $( \check{X}_i )_{i=1}^{n-2}$ to vector fields $X_1, X_2, \hdots, X_{n-2}$ defined near $q$ in $\H^n$ via $d \phi_h^{-1}$, which are orthogonal to both $\ddt$ and $\ddr$.
Then the key property is that $[X_i,X_j]_q = 0$ for all $i, j$, or equivalently that the distribution determined by $(X_i)_{i=1}^{n-2}$ is integrable. 

Let $v(r)$ and $h(r)$ be positive real-valued functions of $r$.
Define the warped-product metric $\l := \l_{v,h}$ on $E \times (0,\infty)$ by
	\begin{equation*}
	\l =  h^2 \y_{n-2} + v^2 d \th^2 + dr^2.
	\end{equation*}
Of course, when $v = \sinh(r)$ and $h = \cosh(r)$ we recover the hyperbolic metric $\y_n$.
Fix vector fields $(X_i)_{i=1}^{n-2}$ as above.  
%Analogously, define an orthonormal frame $\{ \check{X}_j \}_{j=k+1}^{n-1}$ of $\S^{n-k-1}$ near (the projection of) $p$ which satisfies $[\check{X}_i,\check{X}_j]_p = 0$ for all $k+1 \leq i, j \leq n-1$, and extend this frame to vector fields $\{ X_j \}_{j=k+1}^{n-1}$ in a neighborhood of $q$ via the inclusion $\S^{n-k-1} \to E \times (0,\infty)$.  
Let $X_{n-1} = \ddt$ and $X_n = \ddr$. 
Define the following orthonormal frame for $\l$:
	\begin{equation*}
	Y_i = \frac{1}{h} X_i \; \text{ for } 1 \leq i \leq n-2 \hskip 30pt  Y_{n-1} = \frac{1}{v} X_{n-1}  \hskip 30pt Y_n = X_n.
	\end{equation*}
Formulas for the components of the $(4,0)$ curvature tensor $R_\l$ for $\l$ are given by the following Theorem.

\vskip 20pt

\begin{theorem}[c.f. Section 2 of \cite{Bel real}]\label{thm:curvature formulas for h}
Let $R^{\l}_{i,j,k,l} := \l( R_\l (Y_i, Y_j)Y_k, Y_l )$.
Up to the symmetries of the curvature tensor, the only nonzero components of the (4,0) curvature tensor $R_\l$ are the following:
	\begin{align*}
	&R^\l_{i,j,i,j} = - \frac{1}{h^2} - \left( \frac{h'}{h} \right)^2 \hskip 40pt  R^\l_{i,n-1,i,n-1} = - \frac{h' v'}{hv}  \\	
	&R^\l_{i,n,i,n} = - \frac{h''}{h}   \hskip 80pt  R^\l_{n-1,n,n-1,n}= - \frac{v''}{v}
	\end{align*}
where $1 \leq i, j \leq n-2$.
\end{theorem}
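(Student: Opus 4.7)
The plan is to directly compute the Levi-Civita connection of $\l$ in the orthonormal frame $\{Y_a\}$ and read off the Riemann tensor. Equivalently, one could cite the Bishop--O'Neill curvature formulas for multiply warped products, since $\l$ is a multiply warped product over $((0,\infty), dr^2)$ with fibers $(\H^{n-2}, \y_{n-2})$ and $(\mathbb{S}^1, d\th^2)$ warped by $h(r)$ and $v(r)$ respectively. The formulas in the statement are precisely the four classical sectional-curvature entries for this class of metrics.

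For the direct approach, the preparatory step is to compute the Lie brackets in the orthonormal frame at $q$. Integrability of the horizontal distribution gives $[X_i, X_j]_q = 0$ for $1 \leq i, j \leq n-2$, and the fields $X_i$ do not depend on $r$ or $\th$, so every bracket among $X_1, \dots, X_{n-1}$ vanishes at $q$. Since $h, v$ depend only on $r$, applying $[fX, gY] = fg[X,Y] + f(Xg)Y - g(Yf)X$ yields
\begin{equation*}
[Y_i, Y_j]_q = [Y_i, Y_{n-1}]_q = 0, \qquad [Y_i, Y_n]_q = \tfrac{h'}{h} Y_i, \qquad [Y_{n-1}, Y_n]_q = \tfrac{v'}{v} Y_{n-1}
\end{equation*}
for $1 \leq i, j \leq n-2$. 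Because the frame is orthonormal, the derivative terms in the Koszul identity drop out and a short computation produces the connection coefficients at $q$, namely $\nabla_{Y_i} Y_j = -\delta_{ij} (h'/h) Y_n$, $\nabla_{Y_i} Y_n = (h'/h) Y_i$, $\nabla_{Y_{n-1}} Y_{n-1} = -(v'/v) Y_n$, $\nabla_{Y_{n-1}} Y_n = (v'/v) Y_{n-1}$, together with the intrinsic Levi-Civita connection of $\y_{n-2}$ on the horizontal block. Substituting into $R_\l(Y_a, Y_b) Y_c = \nabla_{Y_a} \nabla_{Y_b} Y_c - \nabla_{Y_b} \nabla_{Y_a} Y_c - \nabla_{[Y_a, Y_b]} Y_c$ and pairing with $Y_d$ produces the four listed entries: $R^\l_{i,j,i,j} = -1/h^2 - (h'/h)^2$ decomposes as the intrinsic sectional curvature $-1/h^2$ of $h^2 \y_{n-2}$ plus the standard warping correction, while the remaining three are the usual single- and cross-warped sectional curvatures.

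The main obstacle is the bookkeeping needed to verify that every $R^\l_{a,b,c,d}$ not on the list vanishes. The frame partitions into three blocks $\{Y_1, \dots, Y_{n-2}\}$, $\{Y_{n-1}\}$, $\{Y_n\}$, and the connection table above maps each pair of blocks into a specific target block, so most combinatorial index configurations are forced to zero by pigeonhole on which blocks can appear in $\nabla_{Y_a} \nabla_{Y_b} Y_c$. The one delicate family is $R^\l_{i,j,i,k}$ for three distinct horizontal indices, which reduces via the warped-product calculation to the corresponding intrinsic Riemann component of $\y_{n-2}$; this vanishes because $\H^{n-2}$ has constant sectional curvature. Once this verification is done, the four formulas in the statement are the only nonzero components, as claimed.
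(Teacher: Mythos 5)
Your computation is correct in substance; since $\l$ is an honest doubly warped product over $(0,\infty)$ with totally geodesic fibers and an integrable horizontal distribution, the direct Koszul/Levi--Civita calculation in the orthonormal frame is entirely adequate and gives exactly the four listed entries, with the constant curvature of $\y_{n-2}$ killing the remaining purely horizontal components $R^\l_{i,j,i,k}$ and $R^\l_{i,j,k,l}$ once you include the vanishing warping cross-terms.

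Worth noting, though: the paper does not prove this theorem at all --- it cites it from Section 2 of \cite{Bel real}. The only proof the paper gives of a curvature formula is for the complex hyperbolic analogue (Theorem \ref{thm:new curvature equations}), and there it goes through Besse's Riemannian submersion machinery: compute the O'Neill $A$ and $T$ tensors for the submersion $E_c(r) \to \C\H^{n-1}$, invoke Theorem 9.28 of \cite{Besse}, and then pass from the tube metric $\mu_r$ to the full metric $\mu$ via the auxiliary identities in Appendix~B of \cite{Bel complex}. Your route is different and more elementary. For the present statement the two approaches coincide in difficulty because $A = T = 0$ for the real hyperbolic submersion, so the submersion formulas degenerate to the warped-product ones you are computing directly; the real payoff of the $A$/$T$ machinery only shows up in the complex case where $A \neq 0$ (the structure constants $c_i$) and produces the extra $\tfrac{c_i^2 v^2}{16 h^4}$-type corrections and the nonzero mixed components. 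Your direct approach would become considerably messier there, whereas it is the cleanest possible argument here.

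One small caution: with the standard sign convention $R(X,Y)Z = \nabla_X\nabla_Y Z - \nabla_Y\nabla_X Z - \nabla_{[X,Y]}Z$ that you wrote down, a direct computation gives $\l\bigl(R_\l(Y_i,Y_n)Y_i,Y_n\bigr) = +\tfrac{h''}{h}$, which is $-K(\operatorname{span}(Y_i,Y_n))$, not $K$. The theorem as stated normalizes $R^\l_{a,b,a,b}$ to equal the sectional curvature (so that plugging $h=\cosh$, $v=\sinh$ yields $-1$), which corresponds to the opposite sign convention on the $(3,1)$ tensor, i.e.\ $\langle R(X,Y)X,Y\rangle = K$ as in do Carmo. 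You should flip the convention (or equivalently pair with $Y_a$ instead of $Y_b$) so that your final formulas carry the minus signs that appear in the statement. This is a notational mismatch, not a mathematical gap, but it should be fixed to make the substitution at the end land on the stated formulas.
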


\vskip 10pt

One easily checks that plugging in the values $v(r) = \sinh(r)$ and $h(r) = \cosh(r)$ gives all sectional curvatures of $-1$.

\subsection{The metric on $\C \H^n$ in polar coordinates about $\C \H^{n-1}$}\label{subsection:chn/chn-1}
Let $\C \H^{n-1}$ denote a codimension one (real codimension two) complex submanifold in $\C \H^n$, let $r$ denote the distance to $\C \H^{n-1}$ within $\C \H^n$, and let $\c_n$ and $\c_{n-1}$ denote the metrics on $\C \H^n$ and $\C \H^{n-1}$ normalized to have constant holomorphic curvature $-4$.  
Let $\phi_c : \C \H^n \to \C \H^{n-1}$ denote the orthogonal projection onto $\C \H^{n-1}$, and let $E_c(r)$ denote the $r$-tube about $\C \H^{n-1}$.   
Again, since $\C \H^{n-1}$ is contractible, topologically one has that  $E_c(r) \cong \R^{2n-2} \times \S^1$.  
The metric in $\C \H^n$ in polar coordinates about $\C \H^{n-1}$ is then given by
	\begin{equation}\label{eqn:c metric}
	\c_n = \cosh^2(r) \c_{n-1} + \frac{1}{4} \sinh^2(2r) d \th^2 + dr^2.
	\end{equation}
Note that the presence of the $1/4$ in the $d \th^2$ term is so that $\c_n$ is complete, or equivalently so that $\c_n$ has total angle of $2 \pi$ about the core copy of $\C \H^{n-1}$.  
%We define $S$ with respect to $\ddt$, and $T$ is defined analogously with respect to $\ddr$.  
%Fix $q \in E_c(r)$ and let $p = \phi_c(q)$.  
%Let $\check{T} = d\phi_c (J \ddr)$, where $J$ denotes the complex structure of $\C \H^n$.  
%Note that $\check{T} \neq 0$ since $J \ddr$ is orthogonal to $\ddr$.
%Let $\a$ be a unit speed geodesic in $\H^n$ so that $\a(0) = p$ and $\a'(0)$ is parallel to $\check{T}$.  
%Then define $T(q) = d \phi_c^{-1} (\a'(0))$, where again we are requiring that $T$ be orthogonal to the kernel of $d \phi_c$.  
%Since $q$ was arbitrary, this defines $T$ over all of $\C \H^n \setminus \H^n$.  
%We analogously define $T$ using $\ddr$ in place of $\ddt$.  

Let $p \in \C \H^{n-1}$.  
We define a special basis $(\check{X}_1, \check{X}_2, \hdots, \check{X}_{2n-2})$ of $T_p \C \H^{n-1}$, which we call a {\it holomorphic basis} near $p$, as follows.  
We first define $\check{X}_1$ to be any unit vector in $T_p \C \H^{n-1}$.  
We then define $\check{X}_2 = J \check{X}_1$, where $J$ denotes the complex structure on $\C \H^n$.  
We call such a pair $\{ \check{X}, J \check{X} \}$ a {\it holomorphic pair}.
Let $\check{X}_3$ be any unit vector in $T_p \C \H^{n-1}$ which is orthogonal to $\text{span}(\check{X}_1, \check{X}_2)$, and let $\check{X}_4 = J \check{X}_3$.  
We continue in this way to construct the entire orthonormal basis $(\check{X}_1, \check{X}_2, \hdots, \check{X}_{2n-2})$ of $T_p \C \H^{n-1}$ in such a way that, for $i$ odd, the pair $(\check{X}_i, \check{X}_{i+1})$ is a holomorphic pair.
Via a standard construction, we can extend this basis to a neighborhood of $p$ in $\C \H^{n-1}$ in such a way that $[ \check{X}_i, \check{X}_j ]_p = 0$ for all $i$ and $j$.  
Let us note that if $i$ is an odd integer then $\text{exp}_p(\text{span}(\check{X}_i, \check{X}_{i+1}))$ is a complex line, whereas if $\{ \check{X}_i, \check{X}_j \}$ do not form a holomorphic pair then $\text{exp}_p(\text{span}(\check{X}_i, \check{X}_j))$ is a totally real totally geodesic subspace of $\C \H^{n-1}$.   
%Let $p \in \C \H^{n-1}$ and let $\check{X}_1, \check{X}_2, \hdots, \check{X}_{2n-2}$ be a local orthonormal frame near $p$ in $\C \H^{n-1}$ satisfying that $[ \check{X}_i, \check{X}_j ]_p = 0$ for all $i, j$.

Let $q \in \C \H^n$ be such that $\phi_c(q) = p$.
Extend the collection $( \check{X}_i )_{i=1}^{2n-2}$ to vector fields $X_1, X_2, \hdots, X_{2n-2}$ defined near $q$ in $\C \H^n$ via $d \phi_c^{-1}$ in such a way that they are invariant with respect to both $\theta$ and $r$.  
We will call such a frame a {\it holomorphic frame} near $q$.  
Just as above, we need to understand the Lie brackets of this frame associated to the metric in \eqref{eqn:c metric}. 
It is proved in \cite{Bel complex} that there exist {\it structure constants} $c_{i}$ such that
	\begin{equation}\label{eqn:structure constants 1}
	[X_i,X_{i+1}] = c_i \ddt \qquad \text{for } i \text{ odd} 
	\end{equation}
and that $[X_i,X_j] = 0$ whenever $\{ X_i, X_j \}$ is not a holomorphic pair.  
Moreover, in \cite{Bel complex} it is actually proved\footnote{Belegradek obtains structure constants of $\pm \dfrac{1}{2}$, but the structure constants here are $\pm 2$ due to the metric being scaled to have sectional curvatures in $[-4,-1]$ instead of $\ds{ \left[ -1, \frac{-1}{4} \right] }$}  that
	\begin{equation}\label{eqn:structure constants 2}
	c_i = \pm 2 \qquad \forall i.  
	\end{equation}
In our curvature calculations later in the paper, we will need to understand the complex hyperbolic metric with arbitrary structure constants.  
This is why we cannot use the curvature calculations in \cite{Bel complex} and why we keep $c_i$ in equation \eqref{eqn:structure constants 1}.

Let $v(r)$ and $h(r)$ be positive real-valued functions of $r$.
Define the metric $\mu := \mu_{v,h}$ on $E \times (0,\infty)$ by
	\begin{equation}\label{eqn:mu}
	\mu =  h^2 \c_{n-1} + \frac{1}{4} v^2 d \th^2 + dr^2.
	\end{equation}
Note that when $v = \sinh(2r)$ and $h = \cosh(r)$ we recover the complex hyperbolic metric $\c_n$.
Fix a holomorphic frame $(X_i)_{i=1}^{2n-2}$ as above.  
%Analogously, define an orthonormal frame $\{ \check{X}_j \}_{j=k+1}^{n-1}$ of $\S^{n-k-1}$ near (the projection of) $p$ which satisfies $[\check{X}_i,\check{X}_j]_p = 0$ for all $k+1 \leq i, j \leq n-1$, and extend this frame to vector fields $\{ X_j \}_{j=k+1}^{n-1}$ in a neighborhood of $q$ via the inclusion $\S^{n-k-1} \to E \times (0,\infty)$.  
Let $X_{2n-1} = \ddt$ and $X_{2n} = \ddr$. 
Define the following orthonormal frame for $\mu$:
	\begin{equation}
	Y_i = \frac{1}{h} X_i \; \text{ for } 1 \leq i \leq 2n-2 \hskip 30pt  Y_{2n-1} = \frac{1}{\frac{1}{2} v} X_{2n-1}  \hskip 30pt Y_{2n} = X_{2n}.  \label{eqn:ON basis}
	\end{equation}
Formulas for the components of the $(4,0)$ curvature tensor $R_\mu$ of $\mu$ are given by the following Theorem.

\vskip 20pt

\begin{theorem}[compare Sections 7 and 8 of \cite{Bel complex} with Theorem 4.3 of \cite{Min warped}]\label{thm:new curvature equations}
Let $R^{\mu}_{i,j,k,l} := \mu( R_\mu (Y_i, Y_j)Y_k, Y_l )$, and let $(X_i)_{i=1}^{2n-2}$ be a holomorphic basis.
Then, up to the symmetries of the curvature tensor, we have the following formulas for the components of the (4,0) curvature tensor $R_\mu$ with respect to the corresponding orthonormal basis $(Y_i)_{i=1}^{2n}$:
	\begin{align*}
	&R^{\mu}_{i,j,i,j} = - \frac{1}{h^2} - \left( \frac{h'}{h} \right)^2  \hskip 40pt  R^{\mu}_{i,2n-1,i,2n-1} = - \frac{h' v'}{hv} + \frac{c_i^2 v^2}{16 h^4}  \\	
	&R^{\mu}_{i, i+1, i, i+1} = - \left( \frac{h'}{h} \right)^2 - \frac{4}{h^2} - \frac{3 c_i^2 v^2}{16 h^4} \\
	&R^{\mu}_{i,2n,i,2n} = - \frac{h''}{h}   \hskip 70pt  R^{\mu}_{2n-1,2n,2n-1,2n}= - \frac{v''}{v}  \\
	&R^{\mu}_{i, i+1, 2n-1, 2n} = 2 R^{\mu}_{i, 2n-1, i+1, 2n} = -2R^{\mu}_{i, 2n, i+1, 2n-1} = - c_{i} \frac{v}{2h^2} \left( \ln \frac{v}{h} \right)'  \\
	&R^{\mu}_{i, i+1, k, k+1} = 2R^{\mu}_{i, k, i+1, k+1} = -2R^{\mu}_{i, k+1, i+1, k} = - \frac{2}{h^2} - \frac{c_i c_k v^2}{8h^4}
	\end{align*}
where $1 \leq i, j, k \leq 2n-2$, $k$ is an odd integer different from $i$, and $j \neq i, i+1$.  
Also, any equations using both $i$ and $i+1$ assume that $i$ is an odd integer.
\end{theorem}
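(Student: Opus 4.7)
The plan is a direct computation with the Levi-Civita connection of $\mu$, in the spirit of Belegradek's derivation in \cite{Bel complex} but keeping arbitrary warping functions $v, h$ and arbitrary structure constants $c_i$. First, I would compute the Lie brackets of the orthonormal frame $(Y_i)_{i=1}^{2n}$ at the point $q$. Using the identity $[fX, gY] = fg[X,Y] + fX(g)Y - gY(f)X$ together with the facts that (i) the base vectors satisfy $[\check X_i, \check X_j]_p = 0$ whenever $\{X_i, X_j\}$ is not a holomorphic pair, (ii) $[X_i, X_{i+1}] = c_i \partial_\theta$ for $i$ odd, and (iii) the $X_i$ are invariant in $\theta$ and $r$, the only non-vanishing brackets at $q$ turn out to be $[Y_i, Y_{i+1}] = (c_i v / 2h^2) Y_{2n-1}$ for $i$ odd, $[Y_i, Y_{2n}] = (h'/h) Y_i$ for $1 \leq i \leq 2n-2$, and $[Y_{2n-1}, Y_{2n}] = (v'/v) Y_{2n-1}$.

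Next, after extending $(\check X_i)$ to a normal frame at $p$ (so that the holomorphic pair structure and the commuting property at $p$ are preserved while Christoffel symbols of $\c_{n-1}$ vanish at $p$), the only inner products $\mu(Y_j, Y_k)$ that are non-constant are those with $j, k \leq 2n-2$, and these have vanishing first derivatives in every frame direction at $q$. Koszul's formula therefore reduces at $q$ to $2\mu(\nabla_{Y_i} Y_j, Y_k) = \mu([Y_i, Y_j], Y_k) - \mu([Y_i, Y_k], Y_j) - \mu([Y_j, Y_k], Y_i)$, and plugging in the brackets from the first step yields explicit expressions for all $\nabla_{Y_i} Y_j$ at $q$. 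The curvature components then follow from $R^\mu_{i,j,k,l} = \mu(\nabla_{Y_i} \nabla_{Y_j} Y_k - \nabla_{Y_j} \nabla_{Y_i} Y_k - \nabla_{[Y_i, Y_j]} Y_k, Y_l)$, taking the required derivatives of the connection coefficients along the relevant frame directions.

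For the diagonal components $R^\mu_{i,j,i,j}$, $R^\mu_{i, 2n-1, i, 2n-1}$, $R^\mu_{i, i+1, i, i+1}$, $R^\mu_{i, 2n, i, 2n}$, and $R^\mu_{2n-1, 2n, 2n-1, 2n}$, the computation closely parallels that in \cite{Bel complex} and \cite{Min warped}: the radial second derivatives produce the $h''/h$ and $v''/v$ contributions, while the tangential connection terms supply the $1/h^2$, $(h'/h)^2$, $h'v'/hv$, and $c_i^2 v^2/h^4$ pieces. The main obstacle is bookkeeping the mixed components $R^\mu_{i,i+1, 2n-1, 2n}$ and $R^\mu_{i,i+1, k, k+1}$, which are precisely the ``non-integrability'' corrections that distinguish this computation from the real hyperbolic warped-product case of Theorem \ref{thm:curvature formulas for h}. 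These arise because $[Y_i, Y_{i+1}]$ lies in the vertical $Y_{2n-1}$ direction, so connection terms involving one holomorphic pair couple nontrivially to any other holomorphic pair and to the $(Y_{2n-1}, Y_{2n})$ plane. The advertised identities among $R^\mu_{i,i+1,2n-1,2n}$, $R^\mu_{i,2n-1,i+1,2n}$, and $R^\mu_{i,2n,i+1,2n-1}$ (and the analogous ones involving a second holomorphic pair indexed by $k$) can then be verified either by direct computation or, more efficiently, by invoking the first Bianchi identity to trade any one of them for the other two.
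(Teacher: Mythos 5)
Your plan is a genuinely different route from the paper's. The paper treats $\mu$ as a two-step Riemannian submersion: it first computes the O'Neill $A$ and $T$ tensors of the circle bundle $\mu_r = h^2\c_{n-1} + \tfrac{1}{4}v^2 d\th^2 \to (\C\H^{n-1}, h^2\c_{n-1})$, applies Besse's Theorem 9.28 to read off the curvatures of $\mu_r$ in terms of the fiber curvature, base curvature, and $A$-tensor, and then uses the warped-product formulas from Appendix B of \cite{Bel complex} to add in the radial $dr^2$ direction. This modular decomposition is what keeps the bookkeeping under control: the ``$-4h^2$'', ``$-h^2$'', and ``$-2h^2$'' contributions come directly from the base curvature $\check R_{\mu_r}$, the $c_i^2$-terms come cleanly from $\mu_r(A_{X_i}X_{i+1}, A_{X_i}X_{i+1})$ and related $A$-tensor pairings, and the radial terms ($-h''/h$, $-v''/v$, $-h'v'/hv$) come from Appendix B. Your direct-Koszul approach avoids the submersion machinery and is in principle fine, and your computation of the nonzero Lie brackets of the frame $(Y_i)$ is correct and complete.

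Two things in your outline need more care before it would qualify as a proof. First, the Koszul reduction you invoke gives $\nabla_{Y_i}Y_j$ only at the single point $q$; the curvature formula requires differentiating $\nabla_{Y_j}Y_k$ in the $Y_i$ direction, so you must work with $\nabla_{Y_j}Y_k$ as a local vector field. In a normal frame the bracket $[\check X_i, \check X_j]$ vanishes only at $p$, and it is precisely the first derivatives of these brackets at $p$ (equivalently, the second derivatives of the metric coefficients) that produce the curvature of $\c_{n-1}$, supplying the $-4/h^2$, $-1/h^2$, and $-2/h^2$ constants. Your outline attributes these to ``tangential connection terms,'' but they really are the base curvature, and a correct writeup has to show explicitly how they appear when one differentiates the Koszul expressions. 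Second, the first Bianchi identity combined with the curvature-tensor symmetries gives only \emph{one} linear relation among the three quantities $R^\mu_{i,i+1,2n-1,2n}$, $R^\mu_{i,2n-1,i+1,2n}$, $R^\mu_{i,2n,i+1,2n-1}$ (and likewise for the $(i,i+1,k,k+1)$ family), namely $R^\mu_{i,i+1,2n-1,2n} = R^\mu_{i,2n-1,i+1,2n} - R^\mu_{i,2n,i+1,2n-1}$. It cannot by itself deliver the pair of ratios $2$ and $-2$; you still must compute two of the three components directly and then Bianchi gives you the third. With those two points tightened up, your argument should recover the stated formulas.
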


\vskip 10pt

\begin{proof}
The proof here is analogous to Section 6 of \cite{Bel complex}.  
We compute the $A$ and $T$ tensors of the metric
	\begin{equation*}
	\mu_r = h^2 \c_{n-1} + \frac{1}{4} v^2 d \th^2.
	\end{equation*}
We then use Theorem 9.28 of \cite{Besse} to compute formulas for the components of the sectional curvature tensor $R_{\mu_r}$ of $\mu_r$.
Finally, we use the equations in Appendix B of \cite{Bel complex} to obtain the components of $R_{\mu}$.  

First, by identical reasoning to \cite{Bel complex}, the fibers of the Riemannian submersion are totally geodesic.  
Thus, the $T$ tensor is identically zero.  
We now compute the $A$ tensor.  
For this, we fix the notation that $i$ is an odd integer and $j \neq i, i+1$.  

By (\cite{Besse} eqn.\ 9.24) we have that
	\begin{equation*}
	A_{X_i}X_{i+1} = \frac{1}{2} \mathscr{V} [X_i, X_{i+1}] = \frac{c_i}{2} X_{2n-1}	\hskip 20pt	\text{and}	\hskip 20pt	A_{X_i}X_j = 0.
	\end{equation*}
Then by (\cite{Besse} eqn.\ 9.21d), we have
	\begin{align*}
	\mu_r (A_{X_i} X_{2n-1}, X_{i+1} ) &= - \mu_r ( A_{X_i} X_{i+1}, X_{2n-1} )  \\
	&= - \frac{c_i}{2} \mu_r ( X_{2n-1}, X_{2n-1} ) = - \frac{c_i}{8} v^2.
	\end{align*}
In a similar manner we see that all other components of $A_{X_i}X_{2n-1}$ are zero.  
Thus, we have that
	\begin{equation*}
	A_{X_i}X_{2n-1} = - \frac{c_i v^2}{8 h^2} X_{i+1}	\hskip 20pt	\text{and}	\hskip 20pt	A_{X_{i+1}}X_{2n-1} = \frac{c_i v^2}{8 h^2} X_i.
	\end{equation*}
	
We are now ready to compute the components of $R_{\mu_r}$ in terms of the basis $(Y_i)_{i=1}^{2n-1}$.  
In the following calculations we use Theorem 9.28 from \cite{Besse}, as well as the linearity of the curvature tensor.  
Also, in the fourth bullet point, $j$ denotes an odd integer different from $i$.
	\begin{align*}
	\bullet \; \mu_r ( &R_{\mu_r}(X_i, X_{2n-1})X_i, X_{2n-1} ) = \mu_r ( A_{X_i}X_{2n-1}, A_{X_i}X_{2n-1} ) = \frac{c_i^2 v^4}{64 h^4} \cdot h^2 = \frac{c_i^2 v^4}{64 h^2}  \\
	& \implies \; \mu_r ( R_{\mu_r}(Y_i, Y_{2n-1})Y_i, Y_{2n-1} ) = \frac{4}{h^2 v^2} \cdot \frac{c_i^2 v^4}{64 h^2} = \frac{c_i^2 v^2}{16 h^4}.  \\
	\bullet \; \mu_r ( &R_{\mu_r}(X_i, X_{i+1})X_i, X_{i+1} ) = \mu_r ( \check{R}_{\mu_r} (\check{X}_i, \check{X}_{i+1})\check{X}_i, \check{X}_{i+1} ) - 3 \mu_r ( A_{X_i}X_{i+1}, A_{X_i}X_{i+1} )  \\
	&= -4h^2 - \frac{3 c_i^2 v^2}{16}  \\
	& \implies \; \mu_r ( R_{\mu_r}(Y_i, Y_{i+1})Y_i, Y_{i+1} ) = \frac{1}{h^4} \cdot \left( -4h^2 - \frac{3 c_i^2 v^2}{16} \right) = - \frac{4}{h^2} - \frac{3 c_i^2 v^2}{16 h^4}.  \\
	\bullet \; \mu_r ( &R_{\mu_r}(X_i, X_j)X_i, X_j ) = \mu_r ( \check{R}_{\mu_r} (\check{X}_i, \check{X}_j)\check{X}_i, \check{X}_j ) - 3 \mu_r ( A_{X_i}X_j, A_{X_i}X_j ) = -h^2   \\
	& \implies \; \mu_r ( R_{\mu_r}(Y_i, Y_j)Y_i, Y_j ) = \frac{1}{h^4} \cdot (-h^2) = - \frac{1}{h^2}.  \\
	\bullet \; \mu_r ( &R_{\mu_r}(X_i, X_{i+1})X_j, X_{j+1} ) = \mu_r ( \check{R}_{\mu_r} (\check{X}_i, \check{X}_{i+1})\check{X}_j, \check{X}_{j+1} ) - 2 \mu_r ( A_{X_i}X_{i+1}, A_{X_j}X_{j+1} )   \\
	&= -2h^2 - 2 \left( \frac{c_i c_j}{4} \cdot \frac{v^2}{4} \right) = -2h^2 - \frac{c_i c_j v^2}{8}  \\
	& \implies \; \mu_r ( R_{\mu_r}(Y_i, Y_{i+1})Y_j, Y_{j+1} ) = \frac{1}{h^4} \cdot \left( -2h^2 - \frac{c_i c_j v^2}{8} \right) = - \frac{2}{h^2} - \frac{c_i c_j v^2}{8h^4}.
	\end{align*}
	
A few quick remarks about the above calculations.  
All of the curvatures with a ``hat" above the symbols occur in the horizontal fiber, which is isometric to the $h^2$-multiple of $\c_{n-1}$.  
One can use Proposition IX.7.3 from \cite{KN} to verify that the corresponding curvatures above in $\c_{n-1}$ are indeed $-4$, $-1$, and $-2$.  
One can also deduce from Theorem 9.28 in \cite{Besse} that $R^{\mu_r}_{i+1, 2n-1, i+1, 2n-1} = R^{\mu_r}_{i, 2n-1, i, 2n-1}$, and that $R^{\mu_r}_{i, i+1, j, j+1} = 2 R^{\mu_r}_{i, j, i+1, j+1} = -2 R^{\mu_r}_{i, j+1, i_1, j}$ (and recall that this subscript notation is just the components of the curvature tensor with respect to the ON basis $(Y_i)$).

Combining the above computations with the equations in Appendix B of \cite{Bel complex} proves the Theorem.

\end{proof}

\subsection{The Pinching Lemma}
The following Lemma is our main technical result which ensures that the metric constructed below in Theorem \ref{thm:warping construction 1} and Corollary \ref{cor:warping construction 3} is almost negatively $1/4$-pinched.  
In the statement below, $K_g(\sigma)$ denotes the sectional curvature of the $2$-plane $\sigma$ with respect to the metric $g$.  

\vskip 20pt

\begin{lemma}\label{thm:pinched curvature}
Let $g$ denote the metric $\mu$ from equation \eqref{eqn:mu} on $X = \R^{n-2} \times \S^1 \times (0,\infty)$ with $h(r) = \cosh(r)$ and $v(r) = \sinh(2r) = 2 \sinh(r) \cosh(r)$.  
Let $q = (p, \theta, r) \in X$.  
Then for all $\e > 0$, there exists $R > 0$ large and $\eta > 0$ small such that for all 2-planes $\sigma \subseteq T_q (X)$ where $r > R$, we have that $K_g(\s) \in (-4-\e, -1+\e)$ provided that all structure constants $c_i$ from equation \eqref{eqn:structure constants 1} satisfy $c_i \in (-2 - \eta, 2 + \eta)$.  
\end{lemma}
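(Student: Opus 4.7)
The plan is a direct computation based on Theorem \ref{thm:new curvature equations}. Substituting $h(r) = \cosh r$ and $v(r) = \sinh(2r) = 2\sinh r \cosh r$ into the curvature formulas and simplifying with the identities $\cosh(2r)/\cosh^2 r = 1 + \tanh^2 r$ and $\tanh r \coth r = 1$, each component of $R_\mu$ reduces (writing $\alpha_i := c_i/2$) to an explicit function of $\tanh^2 r$, $1/\cosh^2 r$, and the $\alpha_i$'s. In particular,
\begin{align*}
R^\mu_{i,2n-1,i,2n-1} &= -1 + (\alpha_i^2 - 1)\tanh^2 r, & R^\mu_{i,i+1,i,i+1} &= -(1+3\alpha_i^2)\tanh^2 r - \tfrac{4}{\cosh^2 r},\\
R^\mu_{i,i+1,2n-1,2n} &= -2\alpha_i, & R^\mu_{i,i+1,k,k+1} &= -\tfrac{2}{\cosh^2 r} - 2\alpha_i\alpha_k \tanh^2 r,
\end{align*}
while $R^\mu_{i,j,i,j} = -1$, $R^\mu_{i,2n,i,2n} = -1$, and $R^\mu_{2n-1,2n,2n-1,2n} = -4$. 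When $|\alpha_i| = 1$ for every $i$, these components coincide with those of the curvature tensor of $\c_n$ regardless of the signs; flipping an individual $\alpha_i$ corresponds to the orthogonal basis change $Y_{i+1} \mapsto -Y_{i+1}$, which preserves sectional curvatures.

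For a 2-plane $\sigma \subseteq T_q X$ spanned by an orthonormal pair $(U,V)$, introduce the Pl\"ucker coordinates $\Omega^{ij} := u^i v^j - u^j v^i$ and write
\[
K_g(\sigma) = \sum_{i,j,k,l} R^\mu_{ijkl} \,\Omega^{ij} \Omega^{kl}.
\]
For each fixed $\sigma$ this is a polynomial of degree at most two in the $\alpha_i$'s whose coefficients depend continuously on $r$. As $r \to \infty$ the $1/\cosh^2 r$ terms vanish and $\tanh^2 r \to 1$, producing a limit polynomial $K^\infty(\sigma;\alpha)$ with $|K_g(\sigma) - K^\infty(\sigma;\alpha)| = O(1/\cosh^2 r)$ uniformly over $\sigma \in \operatorname{Gr}_2 T_qX$ and over $\alpha$ in any bounded set.

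The crux of the argument is to establish the pinching $K^\infty(\sigma;\alpha) \in [-4,-1]$ for every $\sigma$ and every $\alpha \in [-1,1]^{n-1}$. At the corner set $\{\alpha : |\alpha_i| = 1\}$ this follows from the observation above that $K^\infty$ equals a sectional curvature of $\c_n$. In the interior of the cube, $K^\infty(\sigma;\cdot)$ is a quadratic polynomial in $\alpha$, so its extrema are attained either at corners (handled) or at interior critical points. The interior critical values can be bounded by a direct analysis of the explicit coefficients: for instance, the $\alpha_i^2$-coefficient of $K^\infty(\sigma;\cdot)$ is $(\Omega^{i,2n-1})^2 + (\Omega^{i+1,2n-1})^2 - 3(\Omega^{i,i+1})^2$ and the $\alpha_i\alpha_k$-coefficient for distinct odd $i,k$ is a specific combination of Pl\"ucker monomials, which, together with the Pl\"ucker constraint $\sum_{i<j}(\Omega^{ij})^2 = 1$, confines the critical values to $[-4,-1]$. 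Once this pinching is established, the lemma follows by uniform continuity: given $\e > 0$, choose $R$ large enough that the $O(1/\cosh^2 r)$ correction is below $\e/2$ for $r > R$, and then $\eta$ small enough that extending $\alpha$ from $[-1,1]^{n-1}$ to $(-1-\eta/2, 1+\eta/2)^{n-1}$ shifts $K^\infty(\sigma;\cdot)$ by at most $\e/2$ uniformly in $\sigma$.

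The main obstacle is the pinching of $K^\infty$ on the entire cube $[-1,1]^{n-1}$: because the hypothesis permits $c_i$ to be far from $\pm 2$ (for instance $c_i = 0$), one cannot reduce to a small perturbation of $\c_n$. Instead the pinching must be verified from the explicit formulas themselves, which amounts to a finite-dimensional quadratic optimization problem requiring careful bookkeeping of the Pl\"ucker coordinates of $\sigma$ and of the signs of the Hessian of $K^\infty(\sigma;\cdot)$ in each $\alpha_i$-direction.
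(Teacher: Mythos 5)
Your computation of the asymptotic curvature components in terms of $\alpha_i := c_i/2$ and $\tanh^2 r$ is correct, as is the observation that the corners $\alpha \in \{\pm 1\}^{n-1}$ reproduce the curvature of $\c_n$ via the sign changes $Y_{i+1} \mapsto -Y_{i+1}$. However, the step you yourself flag as the ``crux of the argument'' -- establishing $K^\infty(\sigma;\alpha) \in [-4,-1]$ on the full cube -- is not actually carried out, and this is a genuine gap. Asserting that the $\alpha_i^2$- and $\alpha_i\alpha_k$-coefficients ``together with the Pl\"ucker constraint, confine the critical values to $[-4,-1]$'' is not a proof; the Pl\"ucker coordinates of a 2-plane in $\R^{2n}$ satisfy not only $\sum_{i<j}(\Omega^{ij})^2 = 1$ but also the quadratic Pl\"ucker relations, and optimizing a quadratic form in $\alpha$ whose coefficients are themselves constrained quadratics in the $\Omega^{ij}$ is a nontrivial problem that you do not resolve. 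A smaller but real error: the extrema of a quadratic on a cube are not attained only ``at corners or interior critical points'' -- they can also occur at critical points of restrictions to boundary faces of every dimension, so even the proposed case analysis is incomplete.

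The paper sidesteps the Pl\"ucker-coordinate optimization entirely by exploiting the freedom to choose the holomorphic frame \emph{adapted to the 2-plane} $\sigma$: picking $\check{X}_1$ parallel to $d\phi_c(B)$ for a vector $B \in \sigma$ tangent to the $r$-tube and $\check{X}_3$ so that $d\phi_c(A) \in \operatorname{span}(\check{X}_1,\check{X}_2,\check{X}_3)$ forces $A$ and $B$ to have only the coordinates $a_1,\dots,a_5$ and $b_1,b_4$ nonzero, and only the two structure constants $c_1, c_3$ enter. With this reduction, $K(\sigma)$ becomes an explicit small quadratic expression, and a completion-of-square argument directly shows $K(\sigma) \in [-4,-1]$ asymptotically. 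That frame-adaptation step is the key idea missing from your proposal; without it, the finite-dimensional optimization you gesture at remains open. If you want to push your route through, you would need to either carry out the Pl\"ucker optimization (which is substantially harder) or incorporate the paper's adapted-frame reduction, at which point you would essentially recover the paper's argument.
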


\vskip 10pt

\begin{proof}
Let $\s \subseteq T_q(X)$ be a 2-plane.  
Note that, for almost all such 2-planes $\s$, we have that $d \phi_c (\s)$ has dimension 2 in the base $T_p \C \H^{n-1}$.  
We will prove Lemma \ref{thm:pinched curvature} in the case when $d \phi_c (\s)$ has dimension 2, and the result then extends to all 2-planes by continuity.  
Also, we will prove the statement for $c_i \in [-2,2]$.  
The result will also then extend to $(-2-\eta, 2+ \eta)$ for some $\eta > 0$ by continuity.  

In a similar way as in Section 9 of \cite{Bel complex}, we will choose our holomorphic frame near $q$ depending on the position of $\s$.  
Since $d \phi_c (\s)$ has dimension 2, $\s$ must contain a unit vector $B$ that is tangent at $q$ to the $r$-tube about $\C \H^{n-1}$.  
Choose $\check{X}_1$ to be a unit-vector in $T_p \C \H^{n-1}$ that is parallel to $d \phi_c (B)$.  
Let $\check{X}_2 = J \check{X}_1$.  
Finally, let $A$ be a unit vector in $T_q X$ perpendicular to $B$ such that $\s = \text{span}(A,B)$, and choose a unit vector $\check{X}_3 \in T_p \C \H^{n-1}$ perpendicular to both $\check{X}_1$ and $\check{X}_2$ so that $d \phi_c (A) \in \text{span}(\check{X}_1, \check{X}_2, \check{X}_3)$.  
We then extend $\{ \check{X}_1, \check{X}_2, \check{X}_3 \}$ to a holomorphic frame $\{ \check{X}_i \}_{i=1}^{2n-2}$ near $p$ as in Subsection \ref{subsection:chn/chn-1}, and pull this back via $d \phi^{-1}$ to a holomporphic frame $\{ X_i \}$ near $q$.  
Let $\ds{ X_{2n-1} = \ddt }$ and $\ds{ X_{2n} = \ddr }$, and define the corresponding orthonormal basis $(Y_i)$ as in equation \eqref{eqn:ON basis}.  
Note that we can write
	\begin{equation*}
	 A = a_1 Y_1 + a_2 Y_2 + a_3 Y_3 + a_4 Y_{2n-1} + a_5 Y_{2n} \qquad B = b_1 Y_1 + b_4 Y_{2n-1}
	 \end{equation*}
where 
	\begin{equation}\label{eqn:coefficient conditions}
	a_1^2 + a_2^2 + a_3^2 + a_4^2 + a_5^2 = 1 = b_1^2 + b_4^2 \quad \text{ and } \quad a_1 b_1 + a_4 b_4 = 0.
	\end{equation}  

Letting $h(r) = \cosh(r)$ and $v(r) = \sinh(2r) = 2 \sinh(r) \cosh(r)$ in Theorem \ref{thm:new curvature equations}, and using the subscripts for the basis chosen above gives the following formulas for the components of the curvature tensor $R^{\mu} := R$.  
	\begin{align*}
	&R_{1313} = R_{2323} =  -1 \hskip 40pt  R_{1,2n,1,2n} = R_{2,2n,2,2n} = R_{3,2n,3,2n} = -1   \\	
	&R_{1212} = - \left( \frac{\sinh(r)}{\cosh(r)} \right)^2 - \frac{4}{\cosh^2(r)} - \frac{3 c_1^2 \sinh^2(r)}{4 \cosh^2(r)} \approx -1 - \frac{3 c_1^2}{4} \\
	&R_{1,2n-1,1,2n-1} = R_{2,2n-1,2,2n-1} = -1  - \frac{\sinh^2(r)}{\cosh^2(r)} + \frac{c_1^2 \sinh^2(r)}{4 \cosh^2(r)} \approx -2 + \frac{c_1^2}{4}   \\
	&R_{3,2n-1,3,2n-1} \approx -2 + \frac{c_3^2}{4}  \hskip 70pt  R_{2n-1,2n,2n-1,2n}= - 4  \\
	&R_{1, 2, 2n-1, 2n} = 2 R_{1, 2n-1, 2, 2n} = -2R_{1, 2n, 2, 2n-1} = - c_{1}  
	\end{align*}
where ``$\approx"$ is for $r$ sufficiently large.  

The sectional curvature $K(\s)$ is $\s$ is then computed by
	\begin{align*}
	K(\s) =& g( R(A,B)A, B )  \\
	=& b_1^2 g( R (A, Y_1)A, Y_1 ) + b_4^2 g( R (A, Y_{2n-1})A, Y_{2n-1} ) + 2 b_1 b_4 g( R (A, Y_1)A, Y_{2n-1} ) \\
	=& b_1^2 \left( a_2^2 R_{1212} + a_3^2R_{1313} + a_4^2 R_{1,2n-1,1,2n-1} + a_5^2 R_{1,2n,1,2n} \right)  \\
	&+b_4^2 \left( a_1^2 R_{1,2n-1,1,2n-1} + a_2^2 R_{2,2n-1,2,2n-1} + a_3^2 R_{3,2n-1,3,2n-1} + a_5^2 R_{2n-1,2n,2n-1,2n} \right)  \\
	&+2b_1 b_4 \left(a_2 a_5 R_{1,2,2n-1,2n} - a_1 a_4 R_{1,2n-1,1,2n-1} - a_2 a_5 R_{1,2n,2,2n-1} \right)
	\end{align*}
Plugging in the above curvature formulas gives
	\begin{align*}
	K(\s) \approx & b_1^2 \left( a_2^2 \left( -1 - \frac{3c_1^2}{4} \right) - a_3^2 + a_4^2 \left( -2 + \frac{c_1^2}{4} \right) - a_5^2 \right)  \\
	&+ b_4^2 \left( a_1^2 \left( -2 + \frac{c_1^2}{4} \right) + a_2^2 \left( -2 + \frac{c_1^2}{4} \right) + a_3^2 \left( -2 + \frac{c_3^2}{4} \right) -4a_5^2 \right)  \\
	&+ 2b_1 b_4 \left( -c_1 a_2 a_5 - a_1 a_4 \left( -2 + \frac{c_1^2}{4} \right) - a_2 a_5 \cdot \frac{c_1}{2} \right).
	\end{align*}
Rearranging terms and applying the fact that $b_1^2 + b_4^2 = 1$ yields
	\begin{align*}
	K(\s) \approx & -a_2^2 -a_3^2 - a_5^2 -3a_5^2 b_4^2 - (a_1 b_4 - a_4 b_1)^2 - a_2^2 b_1^2 \left( \frac{3c_1^2}{4} \right)  \\
	&+ \left( -1 + \frac{c_1^2}{4} \right) \left[ \left( a_1 b_4 + a_4 b_1 \right)^2 + a_2^2 b_4^2 \right] - 3 c_1 a_2 a_5 b_1 b_4 +a_3^2 b_4^2 \left( -1 + \frac{c_3^2}{4} \right).
	\end{align*}
Now, notice that since $0 = a_1 b_1 + a_4 b_4$, we have
	\begin{equation*}
	(a_1b_4 - a_4 b_1)^2 = (a_1b_4 - a_4 b_1)^2 + (a_1 b_1 + a_4 b_4)^2 = a_1^2 b_4^2 + a_4^2 b_1^2 + a_1^2 b_1^2 + a_4^2 b_4^2 = a_1^2 + a_4^2.
	\end{equation*}
Substituting this above and using that $a_1^2 + \hdots + a_5^2 = 1$ gives
	\begin{align*}
	K(\s) \approx & -1 - 3 \left( a_5^2 b_4^2 + c_1 a_2 a_5 b_1 b_4 + \frac{1}{4} c_1^2 a_2^2 b_1^2 \right) + \left( -1 + \frac{c_1^2}{4} \right) \left[ \left( a_1 b_4 + a_4 b_1 \right)^2 + a_2^2 b_4^2 \right]  \\ \nonumber
	&+a_3^2 b_4^2 \left( -1 + \frac{c_3^2}{4} \right)  \\ \nonumber
	= & -1 - 3 \left( a_5 b_4 + \frac{1}{2} c_1 a_2 b_1 \right)^2 + \left( -1 + \frac{c_1^2}{4} \right) \left[ \left( a_1 b_4 + a_4 b_1 \right)^2 + a_2^2 b_4^2 \right]   \\ \nonumber
	&+a_3^2 b_4^2 \left( -1 + \frac{c_3^2}{4} \right).  \label{eqn:curvature condition}
	\end{align*}
This last expression is clearly negative.  
One can check that, under the conditions in equation \eqref{eqn:coefficient conditions} and if $|c_i| \leq 2$ for $i=1,3$, this last expression
	\begin{itemize}
	\item  is maximized when $c_1 = \pm 2$, $c_3 = \pm 2$, and $\ds{ a_5 b_4 + \frac{1}{2} c_1 a_2 b_1 = 0 }$.  In this case, we have that $K(\s) \approx -1$.  
	\item  is minimized when $c_1 = c_3 = 0$, $b_4 = \pm 1$, and $a_5 = \pm 1$.  Note that we then must have all other $a$'s and $b$'s equal to $0$.  This then yields $K(\s) \approx -4$.  
	\end{itemize}
	
\end{proof}

\begin{remark}
Note that the condition ``for $r$ sufficiently large" in Lemma \ref{thm:pinched curvature} is really only used in the proof above in order to simplify calculations.  
This is also the only situation where we will need to use Lemma \ref{thm:pinched curvature}.  
But an inspection of the formulas above shows that it is possible that this Theorem holds for all positive values of $r$ and, at the very least, the value of $R$ in the statement of the Theorem need not be very large.  
\end{remark}

%%%%%%%%%%%%%%%%%%%%%%%%%%%%%%%%%%%%%%%%%%%%%%%%%%%%%%%%%%%%%%%%%%%%%%%

\vskip 20pt

%beginning of section 3
\section{Two special metrics and almost pinched metrics that interpolate between them}\label{section:2 metrics}

Fix $E = \R^{2n-2} \times \S^1$.  
In this Section we define two metrics on the product $E \times (0, \infty)$ and prove that we can interpolate between various metrics in an almost $1/4$-pinched way as discussed in the Introduction (see Theorem \ref{thm:warping construction 1}, Corollary \ref{cor:warping construction 3}, and Theorem \ref{thm:warping construction 2} below).  

\vskip 20pt

\subsection{The integrable complex hyperbolic metric $g_{I}$}
The {\it integrable complex hyperbolic metric} $g_{I}$ is defined by setting $h(r) = \cosh(r)$ and $v(r) = \sinh(2r) = 2 \sinh(r) \cosh(r)$ in equation \eqref{eqn:mu}, and setting all structure constants defined in \eqref{eqn:structure constants 1} identically equal to zero.  
This is the metric one would obtain if the complex hyperbolic metric were integrable, that is, if the horizontal fiber $d \phi_c^{-1} (T_p \C \H^{n-1})$ were always tangent to $E_c(r)$ and orthogonal to $\phi_c^{-1}(p)$ as is the case with the hyperbolic metric.  
So, as a metric, we have that
	\begin{equation*}
	g_{I} = \cosh^2(r) \c_{n-1} + \frac{1}{4} \sinh^2(2r) d \th^2 + dr^2
	\end{equation*}
but with different structure constants than the metric $\c_n$.  

Setting $c_i = c_k = 0$ in Theorem \ref{thm:new curvature equations}, as well as inserting $h= \cosh(r)$ and $v=2 \sinh(r) \cosh(r)$, proves the following Lemma.

\vskip 20pt

\begin{lemma}\label{lemma:curvature formulas for gI}
Let $R$ denote the sectional curvature tensor with respect to the integrable complex hyperbolic metric $g_I$.  
Then, using the notation of Theorem \ref{thm:new curvature equations}, we have:
	\begin{align*}
	&R_{i,j,i,j} = - 1   \hskip 40pt  R_{i,2n-1,i,2n-1} = - 1 - \tanh^2(r)	\hskip 40pt	R_{i,2n,i,2n} = - 1  \\	
	&R_{i,i+1,i,i+1} = -1 - \frac{3}{\cosh^2(r)}	\hskip 40pt	R_{2n-1,2n,2n-1,2n}= - 4  \\
	&R_{i, i+1, 2n-1, 2n} = R_{i, 2n-1, i+1, 2n} = R_{i, 2n, i+1, 2n-1} = 0  \\
	&R_{i, i+1, k, k+1} = 2R_{i, k, i+1, k+1} = -2R_{i, k+1, i+1, k} = - \frac{2}{\cosh^2(r)}
	\end{align*}
where $1 \leq i, j, k \leq 2n-2$, $k$ is an odd integer different from $i$, and $j \neq i, i+1$.
Also, any equations using both $i$ and $i+1$ assume that $i$ is an odd integer.
\end{lemma}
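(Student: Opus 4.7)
The proof is a purely mechanical substitution into Theorem \ref{thm:new curvature equations}, so my plan is just to execute that substitution cleanly. By definition, $g_I$ corresponds to the metric $\mu$ from equation \eqref{eqn:mu} with $h(r) = \cosh(r)$, $v(r) = \sinh(2r) = 2\sinh(r)\cosh(r)$, and all structure constants $c_i$ set identically to zero. I would plug these values into each formula in Theorem \ref{thm:new curvature equations} line by line. Every term carrying a factor of $c_i$ or $c_i c_k$ is killed outright; in particular this yields $R_{i,i+1,2n-1,2n} = R_{i,2n-1,i+1,2n} = R_{i,2n,i+1,2n-1} = 0$, which handles the entire fourth line of the statement.

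For the six remaining components I would record once and for all that $h'/h = \tanh(r)$, $h''/h = 1$, and $v''/v = 4$. The only slightly-fiddly identity is
\[
\frac{h'v'}{hv} \;=\; \frac{\sinh(r)\cdot 2\bigl(\cosh^2(r)+\sinh^2(r)\bigr)}{\cosh(r)\cdot 2\sinh(r)\cosh(r)} \;=\; 1+\tanh^2(r),
\]
where the factors of $2$ and the $\sinh(r)$ cancel cleanly thanks to the factorization $v = 2\sinh(r)\cosh(r)$. Combining these substitutions with the Pythagorean identity $\tanh^2(r) + \operatorname{sech}^2(r) = 1$ then produces each stated value: $-1$ for $R_{i,j,i,j}$ and for $R_{i,2n,i,2n}$, namely $-\operatorname{sech}^2(r) - \tanh^2(r)$ and $-h''/h$; the expression $-1-\tanh^2(r)$ for $R_{i,2n-1,i,2n-1}$; the rewrite $-\tanh^2(r)-4\operatorname{sech}^2(r) = -1 - 3/\cosh^2(r)$ for $R_{i,i+1,i,i+1}$; the value $-v''/v = -4$ for $R_{2n-1,2n,2n-1,2n}$; and $-2/h^2 = -2/\cosh^2(r)$ for the holomorphic-pair cross terms $R_{i,i+1,k,k+1}$.

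There is no real obstacle here; the argument is entirely routine. The only place where one could slip is the simplification of $h'v'/(hv)$ displayed above, and even that collapses immediately once the factorization $\sinh(2r) = 2\sinh(r)\cosh(r)$ is used.
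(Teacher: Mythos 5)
Your proof is correct and matches the paper's own argument exactly: the paper simply observes that setting $c_i = c_k = 0$, $h(r) = \cosh(r)$, and $v(r) = 2\sinh(r)\cosh(r)$ in Theorem \ref{thm:new curvature equations} yields the lemma. You have merely carried out that substitution in full detail.
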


\vskip 10pt

By Lemma \ref{thm:pinched curvature} we know that, given $\e > 0$, all sectional curvatures of $g_I$ lie in $(-4-\e, -1+\e)$ for $r$ sufficiently large.  

The next Theorem proves that there exists an almost $1/4$-pinched metric that interpolates between the complex hyperbolic metric $\c_n$ and the metric $g_I$.  
This Theorem and the following Corollary provide two of the three steps in the construction of the almost $1/4$-pinched Riemannian metric in Theorem \ref{thm:main theorem}.

\vskip 20pt

\begin{theorem}\label{thm:warping construction 1}
Let $\e > 0$.  
There exist positive constants $r_1 < r_2$ and a Riemannian metric $g$ defined on $E \times (0, \infty)$ such that
	\begin{enumerate}
	\item  $g$ restricted to $E \times (0, r_1)$ is the metric $\c_n$.  
	\item  $g$ restricted to $E \times (r_2, \infty)$ is the metric $g_I$.  
	\item  all sectional curvatures of $g$ lie in the interval $(-4-\e, -1 + \e)$.  
	\end{enumerate}
\end{theorem}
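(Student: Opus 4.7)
The plan is to exploit the fact that $\c_n$ and $g_I$ are built from the same warping functions $h(r)=\cosh r$ and $v(r)=\sinh(2r)$, and differ only in the structure constants governing the (non-)integrability of the horizontal distribution: $c_i=\pm 2$ for $\c_n$ versus $c_i=0$ for $g_I$. I would construct $g$ by continuously deforming one distribution into the other while holding $h$, $v$, and the horizontal copy of $\c_{n-1}$ fixed.

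First I would fix positive constants $r_1<r_2$ and a smooth monotone function $\s\colon[0,\infty)\to[0,1]$ with $\s\equiv 1$ on $[0,r_1]$ and $\s\equiv 0$ on $[r_2,\infty)$, where $r_1$ will be taken large enough to invoke Lemma \ref{thm:pinched curvature} and $r_2-r_1$ will be taken much larger still, so that $|\s'|$ and $|\s''|$ are uniformly small. Writing $\c_n=h^2 Q_{n-1}+\tfrac14 v^2\alpha^2+dr^2$, where $Q_{n-1}$ is the degenerate horizontal pullback of $\c_{n-1}$ and $\alpha$ is the connection $1$-form on the $\S^1$-bundle $E\to\C\H^{n-1}$ satisfying $\alpha(\partial_\th)=1$, I would define $g$ by replacing $\alpha$ with the interpolated connection $1$-form $\hat\alpha:=(1-\s(r))\,d\th+\s(r)\,\alpha$. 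Equivalently, this rotates the horizontal frame $(X_i)$ so that $[X_i,X_{i+1}]=\pm 2\,\s(r)\,\partial_\th$, and in particular $g$ agrees with $\c_n$ on $E\times(0,r_1)$ and with $g_I$ on $E\times(r_2,\infty)$, giving parts (1) and (2).

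The curvature analysis for part (3) splits into a \emph{main term} plus an \emph{error term}. The main term is what Theorem \ref{thm:new curvature equations} produces if we freeze $\s(r)$ and treat $c_i(r):=\pm 2\s(r)$ as an honest constant at each level: by Lemma \ref{thm:pinched curvature}, provided $r_1$ is sufficiently large, these contributions lie in $(-4-\e/2,\,-1+\e/2)$ for every admissible value $c_i(r)\in[-2,2]$. The error term comes from the genuine $r$-dependence of $\hat\alpha$, visible through the identity $d\hat\alpha=\s(r)\,d\alpha+\s'(r)\,dr\wedge(\alpha-d\th)$: this produces new contributions to the $A$-tensor, and hence extra curvature terms, all carrying explicit factors of $\s'$ or $\s''$. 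By choosing $r_2-r_1$ sufficiently large, these can be forced smaller than $\e/2$ uniformly in $r$ and over unit $2$-planes, which combined with the main term yields (3).

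The main obstacle, in my view, is rigorously quantifying the error term. One route is to redo the $A$- and $T$-tensor computation underlying Theorem \ref{thm:new curvature equations} from scratch with $r$-dependent structure constants, tracking every appearance of $\s'$ and $\s''$; this is tedious but mechanical, since the extra terms carry an explicit factor of $\s'$ or $\s''$ that we are free to shrink. A cleaner route is to invoke smooth dependence of the curvature tensor on the metric: for each fixed $s\in[0,1]$ let $g_s$ denote the metric obtained with constant structure constants $\pm 2s$; then our $g$ is pointwise a $C^2$-perturbation of $g_{\s(r)}$ whose $1$-jet and $2$-jet differ by $O(\s'(r))$ and $O(\s''(r))$ respectively, so the sectional curvatures differ by the same order. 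Either way, combining the pinched main term with the small error completes the proof.
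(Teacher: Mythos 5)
Your proposal is correct and follows essentially the same strategy as the paper: slowly interpolate the horizontal distribution from that of $\c_n$ to that of $g_I$, invoke Lemma \ref{thm:pinched curvature} for the ``frozen structure constant'' part of the curvature, and absorb the remaining contributions into an error controlled by choosing $|\s'|,|\s''|$ small. Your connection--one-form formulation $\hat\alpha=(1-\s)\,d\th+\s\,\alpha$ in fact produces exactly the same metric as the paper's interpolation $X_i=\s X_i^c+(1-\s)X_i^h$ (the $\hat\alpha$-horizontal lift of $\check X_i$ is precisely this combination), and your derivation via $d\hat\alpha$ has the mild advantage of yielding the effective structure constant $\pm 2\s(r)$ exactly, whereas the paper's Lie-bracket computation obtains a comparable bound less directly; both land in $[-2,2]$, so Lemma \ref{thm:pinched curvature} applies either way.
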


\vskip 10pt

\begin{proof}  Morally, to construct this metric one just, very slowly, turns the horizontal fiber of the complex hyperbolic metric until it is tangent to the $r$-tube about a copy of $\C \H^{n-1}$ in such a way that all structure constants decrease from $2$ to $0$.  
Since the structure constants are, in fact, constant, this can be done in a manner which is invariant of both $\theta$ and the location of the base point in $\C \H^{n-1}$.  
By moving this fiber sufficiently slowly one can locally $C^2$-approximate this metric by a metric where $c_i$ is constant for each $i$.  
Lemma \ref{thm:pinched curvature} then shows that the curvature is almost $1/4$-pinched.  
The argument below gives technical details on one way that this can all be done, but the author feels that the above description is more enlightening.

From Section 2 we know that both $\C \H^n / \C \H^{n-1}$ and $\H^{2n} / \H^{2n-2}$ are diffeomorphic to $E \times (0, \infty)$ where, for any $p \in \R^{2n-2} \cong \C \H^n \cong \H^{2n}$ and $r > 0$, we have that $(p, \S^1, r)$ corresponds to the intersection $E_h(r) \cap \phi^{-1}_h (p)$ or $E_c(r) \cap \phi^{-1}_c (p)$.  
So, by fixing diffeomorphisms $f_h: \H^{2n-2} \to \R^{2n-2}$ and $f_c :\C \H^{n-1} \to \R^{2n-2}$, we can naturally identify both $\C \H^n / \C \H^{n-1}$ and $\H^{2n} / \H^{2n-2}$ with $E \times (0, \infty)$.  

%Fix diffeomorphisms $f_h: \H^{2n-2} \to \R^{2n-2}$ and $f_c :\C \H^{n-1} \to \R^{2n-2}$.  
%We extend these diffeomorphisms to radial isometries $f_h : \H^{2n} \setminus \H^{2n-2} \to E \times (0, \infty)$ and $f_c : \C \H^n \setminus \C \H^{n-1} \to E \times (0, \infty)$.  
%Using the notation from Section 2, by a radial isometry we mean the following.  
%Fix $p \in \H^{2n-2}$ (respectively $\C \H^{n-1}$) and let $r > 0$.  Let $E_h(r)$ denote the $r$-tube in $\H^{2n}$ about $\H^{2n-2}$ (and similarly for $E_c(r)$).  
%The function $f_h$ is a {\it radial isometry} if $f_h(E_h(r) \cap \phi^{-1}_h (p)) = (f_h(p), \S^1, r)$ for all $p$ in the base and for all $r > 0$ (and analogously for $f_c$).  

Via the diffeomorphisms $f_h$ and $f_c$ we will henceforth abuse notation and apply all functions and metrics from Section 2 to the space $E \times (0, \infty)$.  
For example, the metric $\c_n$ will really mean $(f_c^{-1})^* \c_n $, and so on.  

Let $p \in \C \H^{n-1}$ be arbitrary, and let $( \check{X}_1, \check{X}_2, \hdots, \check{X}_{2n-2})$ be a holomorphic basis at $p$.  
Extend this basis to a local frame near $p$ which satisfies that $[\check{X}_i, \check{X}_j]_p = 0$ for all $i$ and $j$.  
Let $U$ be a neighborhood of $p$ in $\C \H^{n-1}$ on which this frame is defined.  

Via the orthogonal projections $\phi_h$ and $\phi_c$ we will define two sets of vector fields on the space $U \times \S^1 \times (0, \infty)$.  
Using $\phi_h^{-1}$ we pull the frame $( \check{X}_i )$ back to obtain the frame $(X^h_i)$, and using $\phi_c^{-1}$ we similarly obtain the frame $(X^c_i)$. 

Define a smooth function $\alpha: (0, \infty) \to [0,1]$ which satisfies
	\begin{itemize}
	\item  $\a(r) = 1$ for all $r \leq r_1$.  
	\item  $\a(r) = 0$ for all $r \geq r_2$.  
	\item  over the interval $(r_1,r_2)$ we have $\a'(r) < \d$ and $\a''(r) < \d$ for some prescribed $\d > 0$ which depends on $\e$.  
	\end{itemize}
For any given $\d > 0$, such a function $\a$ clearly exists provided that the distance $r_2 - r_1$ is sufficiently large.  

For $1 \leq i \leq 2n-2$ we define the vector field $X_i$ on $U \times \S^1 \times (0, \infty)$ by
	$$ X_i = \a X_i^c + (1-\a) X_i^h. $$
Note that, since $\a$ is only a function of $r$ and since $X_i^c$ and $X_i^h$ are both defined via the same vector field $\check{X}_i$, this ``unbending" process is independent of $p$ and $\theta$.  
We define the metric $g$ to be
	\begin{equation}\label{eqn:bending metric}
	g = \cosh^2(r) \left( dX_1^2 + dX_2^2 + \hdots + dX_{2n-2}^2 \right) + \frac{1}{4} \sinh^2(2r) d \theta^2 + dr^2 
	\end{equation}
where $dX_i$ is the covector field dual to $X_i$ for all $i$.  
Note that the horizontal fiber of $g$ deforms from $\text{span}(X_1^c, X_2^c, \hdots, X_{2n-2}^c)$ to $\text{span}(X_1^h, X_2^h, \hdots, X_{2n-2}^h)$ in a manner that is independent of the choice of our original frame since $\a$ only depends on $r$.  
Thus, this metric is really well-defined on all of $E \times (0, \infty)$ .  
Also, note that $g = \c_n$ when $r < r_1$ and $g = g_I$ for $r > r_2$.  
   
To show that the metric $g$ is almost $1/4$-pinched, we need to compute the Lie brackets of the basis $(X_i)$ of the horizontal fiber.
We will show that, for $\d$ sufficiently small and $r_1$ and $r_2-r_1$ sufficiently large, we can $C^2$-approximate $g$ by a metric which satisfies Lemma \ref{thm:pinched curvature}.
Then, since sufficiently $C^2$-small changes in a Riemannian metric cause arbitrarily small changes to its sectional curvature, the proof will be complete.  

To start, note that all vector fields are invariant with respect to $\th$, and therefore $[ X_i, \ddt ] = 0$ for all $i$.  
Next, consider all pairs $(\check{X}_i, \check{X}_j)$ that are not a holomorphic pair (that is, where $\{ i, j \} \neq \{ k, k+1 \}$ for some odd $k$).  
These vector fields were chosen independently of each other, and so $[X^h_i, X^c_j] = 0 = [X^c_i, X^h_j]$.  
From this it follows that $[X_i, X_j] = 0$.  

We now consider a holomorphic pair $(\check{X}_i, \check{X}_{i+1} )$.  
In what follows we assume that the structure constant for this pair is $+2$ instead of $-2$.  
If the sign is reversed, then we just swap the order of the indices.  
We have
	\begin{align*}
	[X_i, & X_{i+1}] = [\a X_i^c + (1-\a) X_i^h, \a X_{i+1}^c + (1-\a) X_{i+1}^h ]  \\
	&= \a^2 [X_i^c, X_{i+1}^c] + \a (1 - \a) \left( [X_i^c, X_{i+1}^h] + [X_i^h, X_{i+1}^c] \right) + (1-\a)^2 [X_i^h, X_{i+1}^h]  \\
	&= 2 \a^2 \ddt + \a (1 - \a) \left( [X_i^c, X_{i+1}^h] + [X_i^h, X_{i+1}^c] \right)
	\end{align*}
	
To analyze the mixed Lie brackets in this equation, write $X_i^h = \sum_{j=1}^{2n-1} a_{i,j} X_j^c$ for some locally defined coefficient functions $a_{i,j}$ and where $X_{2n-1}^c = \ddt$.
When we plug this sum into the Lie bracket $[X_i^h, X_{i+1}^c]$ and evaluate at $q = (p, \th, r)$, the only term that does not zero out is $[a_{i,i} X_i^c, X_{i+1}^c ]$ since we chose our frame so that $[\check{X}_i, \check{X}_{i+1}]_p = 0$.  
Therefore, 
	$$[X_i^h, X_{i+1}^c] = a_{i,i} [X_i^c, X_{i+1}^c] = 2 a_{i,i} \ddt .$$  
Note that, for $r$ large, we have
	\begin{align*}
	\frac{1}{4} e^{2r} &\approx \y_{2n}(X_i^h, X_i^h) \geq \y_{2n} (a_{i,i+1} X_{i+1}^c, a_{i, i+1} X_{i+1}^c) \\
	&= (a_{i, i+1})^2 \y_{2n} (X_{i+1}^c, X_{i+1}^c ) \approx (a_{i, i+1})^2 \frac{1}{4} e^{2r}. 
	\end{align*}
and therefore $|a_{i,i}| \leq 1 + \gamma$ for some $\gamma > 0$ small. 
Analogously, this all also holds for the Lie bracket $[X_i^c, X_{i+1}^h]$ where we write $X_{i+1}^h = \sum_{j=1}^{2n-1} b_{i+1,j} X_j^c$.  
We therefore have
	\begin{align*}
	\bigr| [X_i,  X_{i+1}] \bigr| &= \biggr| 2 \a^2 \ddt + \a (1 - \a) \left( 2 b_{i+1, i} \ddt + 2 a_{i, i+1} \ddt \right) \biggr|  \\
	&\leq |2 \a^2 + 4 \a (1-\a) + \eta | \biggr| \ddt \biggr|
	\end{align*}
where $\eta$ is a small error term due to $\gamma$ above that can be made arbitrarily small by choosing $r$ sufficiently large.
  
Note that, ignoring $\eta$, the coefficient within the absolute values on the right-hand side of the inequality reduces to $4\a - 2 \a^2$.  
If we let $b(r) = 4 \a(r) - 2 \a^2 (r)$, we see that $b(r_1) = 2$, $b(r_2) = 0$, and $b$ is decreasing over $[r_1, r_2]$.  
Thus, the metric in equation \eqref{eqn:bending metric} satisfies the condition on the structure constants for Lemma \ref{thm:pinched curvature}.
One can check that, if the values for both $a_{i, i+1}$ and $b_{i+1, i}$ are both $-1$ (or any values for each between $-1$ and $1$), this coefficient still falls in the interval $(-2-\eta ,2+\eta )$ for sufficiently large $r$ and sufficiently small $\g$.  

The last Lie brackets that we need to consider are the Lie brackets of the form $[X_i, \partial r]$ where $\partial r = \ddr$.  
This is easier to consider with respect to an orthonormal basis $(Y_i)$ where $Y_i = \frac{1}{\cosh(r)} X_i$ for all $1 \leq i \leq 2n-2$.  
We then compute
	\begin{align*}
	[Y_i, \partial r] &= - \left[ \partial r, \frac{\a}{\cosh(r)} X_i^c + \frac{1-\a}{\cosh(r)} X_i^h \right]  \\
	&= - \left( \frac{\a'}{\cosh(r)} - \frac{\a \sinh(r)}{\cosh^2(r)} \right) X_i^c - \left( \frac{-\a'}{\cosh(r)} - \frac{(1-\a)\sinh(r)}{\cosh^2(r)} \right) X_i^h  \\
	&= \frac{\a'}{\cosh(r)} (X_i^h - X_i^c) + \tanh(r) Y_i.
	\end{align*}
With respect to both the hyperbolic and complex hyperbolic metric one has that $[Y_i, \partial r] = \tanh(r) Y_i$.  
So the ``error term" in the above summand is $\frac{\a'}{\cosh(r)} (X_i^h - X_i^c)$.  
If this term were equal to $0$, then Lemma \ref{thm:pinched curvature} would apply and our proof would be complete.  
But the vectors $X_i^h$ and $X_i^c$ grow at a rate of approximately $\cosh(r)$ with respect to all of the metrics here.  
So this error term can be made as small as possible by requiring that $\d$ be chosen sufficiently small.  
We can then $C^2$-approximate this metric by one which satisfies Lemma \ref{thm:pinched curvature} to complete the proof.  
\end{proof}

\vskip 20pt

\begin{cor}\label{cor:warping construction 3}
Let $\e > 0$.  
Then there exist positive constants $r_3 < r_4$ and a Riemannian metric $g$ defined on $E \times [r_3, \infty)$ such that
	\begin{enumerate}
	\item  $g$ restricted to $E \times \{ r_3 \}$ is the metric $g_I$.  
	\item  $g$ restricted to $E \times (r_4, \infty)$ is the metric $\c_n$.  
	\item  all sectional curvatures of $g$ lie in the interval $(-4-\e, -1 + \e)$.  
	\end{enumerate}
\end{cor}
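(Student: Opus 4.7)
The proof strategy is to run the construction in Theorem \ref{thm:warping construction 1} in reverse. In that Theorem, an interpolation function $\a(r)$ slowly decreased from $1$ to $0$ in order to turn the horizontal fiber from the complex hyperbolic one into the integrable one. For this Corollary, I would proceed analogously with a function $\widetilde\a(r)$ that slowly increases from $0$ to $1$, which has the effect of ``rewinding'' the integrable metric $g_I$ back into $\c_n$.

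Explicitly, given $\e > 0$, choose $\d > 0$ small enough that the eventual $C^2$-approximated metric falls inside the pinching tolerance of Lemma \ref{thm:pinched curvature}, and then choose $r_3 < r_4$ with $r_3$ large and $r_4 - r_3$ large enough that a smooth function $\widetilde\a : [r_3, \infty) \to [0,1]$ exists satisfying $\widetilde\a \equiv 0$ on a neighborhood of $r_3$, $\widetilde\a \equiv 1$ on $[r_4, \infty)$, and $|\widetilde\a'|, |\widetilde\a''| < \d$ throughout. Following the construction in Theorem \ref{thm:warping construction 1}, define the mixed horizontal vector fields $X_i = \widetilde\a X_i^c + (1 - \widetilde\a) X_i^h$ and let $g$ be given by equation \eqref{eqn:bending metric} with $\widetilde\a$ in place of $\a$. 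By construction, $g$ restricted to $E \times \{r_3\}$ is $g_I$ and $g$ restricted to $E \times (r_4, \infty)$ is $\c_n$, giving conclusions (1) and (2).

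For the pinching conclusion (3), the Lie bracket analysis in the proof of Theorem \ref{thm:warping construction 1} transports verbatim. The bracket $[X_i, X_{i+1}]$ still produces an effective structure constant of the form $4 \widetilde\a - 2 \widetilde\a^2$ plus a small error controlled by the growth rates of $X_i^h$ and $X_i^c$, and this coefficient still lies in $[0,2]$, hence falls within the interval $(-2-\eta, 2+\eta)$ required by Lemma \ref{thm:pinched curvature}. The bracket $[Y_i, \dr]$ produces the same error term $\frac{\widetilde\a'}{\cosh(r)}(X_i^h - X_i^c)$, which is controlled by the smallness of $\d$. Hence the metric $g$ $C^2$-approximates a metric with constant structure constants (constant in $\th$ and in the base point of $\C \H^{n-1}$), and Lemma \ref{thm:pinched curvature} then yields sectional curvatures in $(-4-\e, -1+\e)$.

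The main (and essentially only) point to verify is that the argument in Theorem \ref{thm:warping construction 1} is insensitive to the direction of interpolation: it uses only the bounds on $\widetilde\a$ and its derivatives, together with the fact that the map $t \mapsto 4t - 2t^2$ sends $[0,1]$ into $[0,2]$. Both features are symmetric under the reversal $\widetilde\a(r) = 1 - \a(r_3 + r_4 - r)$, so no new technical obstacle arises beyond what was already addressed in Theorem \ref{thm:warping construction 1}.
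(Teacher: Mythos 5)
Your proposal is correct and takes the same approach as the paper: the paper's proof is the one-line remark that one simply runs the construction of Theorem \ref{thm:warping construction 1} in reverse, with the observation that $r_3$ must be taken large enough so that $g_I$ is almost pinched on $[r_3,\infty)$. You have supplied the same idea with the details spelled out, including the key point that the Lie bracket estimates and the map $t \mapsto 4t - 2t^2$ are symmetric under reversing the interpolation.
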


\vskip 10pt

\begin{proof}
Just apply the procedure in the proof of Theorem \ref{thm:warping construction 1} backwards.  
Note that this metric is only defined on $E \times [r_3, \infty)$ for some $r_3 > 0$ large since $g_I$ may not have almost pinched negative curvature for small values of $r$.  
\end{proof}

\vskip 20pt

\subsection{The $d$-fold integrable complex hyperbolic metric ${\bf  g_{d}}$}
Let $d \geq 2$ be a positive integer.  
The {\it $d$-fold integrable complex hyperbolic metric} $g_{d}$ is defined as $h(r) = \cosh(r)$ and $v(r) = d \sinh(2r) = 2d \sinh(r) \cosh(r)$ in equation \eqref{eqn:mu}, and setting all structure constants defined in \eqref{eqn:structure constants 1} identically equal to zero.  
So as a metric, we have that
	\begin{equation*}
	g_{d} = \cosh^2(r) \c_{n-1} + \frac{d^2}{4} \sinh^2(2r) d \th^2 + dr^2.
	\end{equation*}
Note that this metric has a cone angle of $2 d \pi$ about the core copy of $\C \H^{n-1}$.  
For the metric $g_d$ to be complete, we need to restrict it to $E \times (r_0, \infty)$ for some $r_0 > 0$.

\begin{remark}
The curvature formulas for $g_d$ are exactly the same as the curvature formulas for $g_I$.  
When all structure constants are $0$ in Theorem \ref{thm:new curvature equations}, then the extra ``d" that is inserted in any denominator with a $v$ will cancel with the same $d$ in the numerator that comes from either a $v'$ or a $v''$.  
\end{remark}

Therefore, by Lemma \ref{thm:pinched curvature} we know that, given $\e > 0$, all sectional curvatures of $g_d$ lie in $(-4-\e, -1+\e)$ for $r$ sufficiently large.

\vskip 20pt

\begin{theorem}\label{thm:warping construction 2}
Let $\e > 0$.  
Then there exist positive constants $r_2 < r_3$ and a Riemannian metric $g$ defined on $E \times [r_2, \infty)$ such that
	\begin{enumerate}
	\item  $g$ restricted to $E \times \{ r_2 \}$ is the metric $g_I$.  
	\item  $g$ restricted to $E \times (r_3, \infty)$ is the metric $g_d$.  
	\item  all sectional curvatures of $g$ lie in the interval $(-4-\e, -1 + \e)$.  
	\end{enumerate}
\end{theorem}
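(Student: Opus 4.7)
The plan is to follow the classical Gromov-Thurston warping argument from \cite{GT}, which is available here precisely because both $g_I$ and $g_d$ have all structure constants equal to zero. With $c_i=0$ the problematic curvature terms identified in Remark \ref{rmk:GT difference}---all of which contain a factor of $c_i$ or $c_i^2$---vanish, so one can safely warp the function $v$ from $\sinh(2r)$ to $d\sinh(2r)$ exactly as in the real hyperbolic setting.

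Concretely, I would pick a smooth monotone function $\sigma:[r_2,\infty)\to[1,d]$ with $\sigma\equiv 1$ near $r_2$ and $\sigma\equiv d$ on $[r_3,\infty)$. By a standard bump-function construction, for any $\delta>0$ one can arrange $|\sigma'|<\delta$ and $|\sigma''|<\delta$ throughout, provided $r_3-r_2$ is sufficiently large. Define $g$ via equation \eqref{eqn:mu} with $h(r)=\cosh(r)$, $v(r)=\sigma(r)\sinh(2r)$, and all structure constants set identically to zero; conditions $(1)$ and $(2)$ then hold by construction.

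For condition $(3)$, I would substitute these warping functions and $c_i=0$ into Theorem \ref{thm:new curvature equations}. All mixed terms vanish, and the components $R_{i,j,i,j}$, $R_{i,i+1,i,i+1}$, $R_{i,2n,i,2n}$, and $R_{i,i+1,k,k+1}$ depend only on $h$; hence they agree exactly with those of $g_I$ listed in Lemma \ref{lemma:curvature formulas for gI}. The only components affected by $\sigma$ are
\begin{align*}
R_{i,2n-1,i,2n-1} &= -1-\tanh^2(r)-\tanh(r)\,\frac{\sigma'}{\sigma}, \\
R_{2n-1,2n,2n-1,2n} &= -4-\frac{\sigma''}{\sigma}-\frac{4\sigma'\cosh(2r)}{\sigma\sinh(2r)},
\end{align*}
and on $[r_2,\infty)$ both of the extra summands are $O(\delta)$ uniformly, since $|\tanh(r)|\leq 1$ and $\cosh(2r)/\sinh(2r)$ is bounded once $r\geq r_2>0$.

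Finally, Lemma \ref{thm:pinched curvature} applied with $c_i=0$ gives every sectional curvature of $g_I$ in $(-4-\e/2,-1+\e/2)$ once $r\geq R$ for some universal threshold $R$, so I would take $r_2\geq R$. A direct inspection of the proof of that Lemma shows it carries over verbatim to $g$ after replacing the two curvature components above; since these replacements contribute an $O(\delta)$ perturbation to the explicit expansion of $K_g(\s)$ in the holomorphic frame adapted to $\s$, choosing $\delta$ small enough (and hence $r_3-r_2$ large) yields $K_g(\s)\in(-4-\e,-1+\e)$ uniformly over all $2$-planes $\s$. The main technical step is this uniform $O(\delta)$ control, but the absence of $c_iv^2/h^4$-type terms here makes that estimate far cleaner than in Theorem \ref{thm:warping construction 1}.
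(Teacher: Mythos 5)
Your argument is correct and follows essentially the same route as the paper: the same choice of slowly-varying $\sigma$, the observation that with $c_i\equiv 0$ only the two components $R_{i,2n-1,i,2n-1}$ and $R_{2n-1,2n,2n-1,2n}$ change, and the conclusion that these are $O(\delta)$ perturbations of the $g_I$ values once $r_2$ is large. (In fact your formula $R_{2n-1,2n,2n-1,2n}=-4-\tfrac{\sigma''}{\sigma}-\tfrac{4\sigma'}{\sigma}\coth(2r)$ is the correct computation of $-v''/v$; the paper's displayed $\coth(r)$ appears to be a typo, though it is harmless since both $\coth(r)$ and $\coth(2r)$ are bounded near $1$ on $[r_2,\infty)$.)
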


\vskip 10pt

\begin{proof}
This proof is very similar to the construction in Section 2 of \cite{GT}.  
Define a function $\s : \R \to [1,d]$ which satisfies
	\begin{itemize}
	\item  $\s(r) = 1$ for all $r \leq r_2$.  
	\item  $\s(r) = d$ for all $r \geq r_3$.  
	\item  $\s'(r) < \d$ and $\s''(r) < \d$ for all $r$.  
	\end{itemize}
Given any fixed $\d > 0$, such a function $\s$ clearly exists provided $r_2$ and $r_3 - r_2$ are sufficiently large.  

We now define the metric $g$ on $E \times [r_2, \infty )$  to be the metric in Theorem \ref{thm:new curvature equations} with the substitutions $h(r) = \cosh(r)$ and $v(r) = \s(r) \sinh(2r) = 2 \s(r) \sinh(r) \cosh(r)$.  
Using the notation of Theorem \ref{thm:new curvature equations}, up to the symmetries of the curvature tensor, the nonzero components of the sectional curvature tensor with respect to $g$ are
	\begin{align*}
	&R_{i,j,i,j} = - 1   \hskip 40pt 	R_{i,2n,i,2n} = - 1  \\	
	&R_{i,2n-1,i,2n-1} = - 1 - \tanh^2(r) - \frac{\s'}{\s} \tanh(r)	\\
	&R_{i,i+1,i,i+1} = -1 - \frac{3}{\cosh^2(r)}	\hskip 40pt	R_{2n-1,2n,2n-1,2n}= - 4 - \frac{4 \s'}{\s} \coth(r) - \frac{\s''}{\s}  \\
	&R_{i, i+1, 2n-1, 2n} = R_{i, 2n-1, i+1, 2n} = R_{i, 2n, i+1, 2n-1} = 0  \\
	&R_{i, i+1, k, k+1} = 2R_{i, k, i+1, k+1} = -2R_{i, k+1, i+1, k} = - \frac{2}{\cosh^2(r)}
	\end{align*}
Since $\tanh(r)$ and $\coth(r)$ both approach $1$ as $r$ approaches $\infty$, one sees that these curvature formulas approach those of the metric $g_I$ for $\s'$ and $\s''$ sufficiently small.  
That is, for $\d$ chosen sufficiently small.  
Thus, all sectional curvatures of $g$ will lie in the interval $(-4 - \e, -1 + \e)$ for appropriately chosen $r_2$, $r_3$, and $\d$.  
\end{proof}

\begin{proof}[Proof of Theorem \ref{thm:main theorem}]
Via the discussion in the Introduction, all that is left to prove is the construction of the metric $g$ on the $d$-fold ramified cover of $\C \H^n$ about $\C \H^{n-1}$ with all curvatures almost $1/4$-pinched.
This really follows immediately from Theorem \ref{thm:warping construction 1}, Theorem \ref{thm:warping construction 2}, and Corollary \ref{cor:warping construction 3}, but we give a quick rigorous argument here.  

Define $r_1$ as in Theorem \ref{thm:warping construction 1}.  
Choose $r_2$ to be the larger of those from Theorem \ref{thm:warping construction 1} and Theorem \ref{thm:warping construction 2} and $r_3$ to be the larger of those from Theorem \ref{thm:warping construction 2} and Corollary \ref{cor:warping construction 3}.  
Finally, choose $r_4$ as in Corollary \ref{cor:warping construction 3} as needed for the chosen $r_3$.  
Let $g_1$ be the metric from Theorem \ref{thm:warping construction 1}, let $g_2$ be the metric from Theorem \ref{thm:warping construction 2}, and let $g_3$ be the metric from Corollary \ref{cor:warping construction 3}.  

Define a metric $g$ on $E \times (0,r_3]$ by
	\[ g =  \begin{cases} 
      	 \c_n & \text{if } r \leq r_1 \\
     	 g_1 & \text{if } r_1 \leq r \leq r_2 \\
      	 g_2 & \text{if } r_2 \leq r \leq r_3
   	\end{cases}
	\]
and note that $g$ is well-defined when $r = r_1, r_2$, and that all sectional curvatures of $g$ lie in the interval $(-4-\e, -1+ \e )$.  

At this point, the metric $g$ restricted to $E \times \{ r_3 \}$ has total angle of $2 d \pi$ about the core copy of $\R^{2n-2}$.  
Subdivide $\S^1$ into $d$ equidistant arcs, and label them $A_1, \hdots, A_d$.  
Then for each $1 \leq \ell \leq d$, the metric $g$ restricted to $\R^{2n-2} \times A_\ell \times \{ r_3 \}$ has total angle $2 \pi$ about $\R^{2n-2}$.  
Moreover, $g$ restricted to a sector $A_\ell$ is equal to $g_{I}$ over all of $\S^1$.
So to finish our construction of $g$ we restrict our attention to $\R^{2n-2} \times A_\ell \times [r_3, \infty)$. 

On $\R^{2n-2} \times A_\ell \times [r_3, \infty)$ define $g = g_3$ for all $1 \leq \ell \leq d$.
Note that this agrees with the previous definition for $g$ on $E \times \{ r_3 \}$, and all sectional curvatures of $g$ remain in $(-4 - \e, -1 + \e)$.  
As long as the ramification locus of $X$ has a normal injectivity radius of at least $r_3$, this metric $g$ will descend to a well-defined Riemannian metric on $X$ that is $\e$-close to being negatively $1/4$-pinched.  

\end{proof}

\vskip 20pt

\end{document}